\documentclass[a4paper,12pt,reqno]{amsart}
\usepackage{amssymb,amsmath,amsthm}
\usepackage{graphicx}

\DeclareGraphicsExtensions{.bmp,.png,.pdf,.jpg,}

\usepackage{bm}
\usepackage{verbatim}
\usepackage{color}
\usepackage{subcaption}
\captionsetup[subfigure]{labelfont=rm}

\textwidth=15.5cm
\oddsidemargin=0mm
\evensidemargin=0mm

\def\R{{\mathbb R}}

\def\a{\alpha}
\def\b{\beta}
\def\t{\theta}

\def\s{\sigma}
\def\t{\theta}
\def\l{\lambda}

\def\e{\varepsilon}
\def\v{\varphi}

\def\mc{\mathcal}

\def\ua{\uparrow}
\def\da{\downarrow}

\numberwithin{equation}{section}

\theoremstyle{definition}

\theoremstyle{plain}
\newtheorem{theorem}{Theorem}[section]
\newtheorem{proposition}{Proposition}[section]

\theoremstyle{definition}

\begin{document}

\title[Nodal solutions of weighted indefinite problems]
{Nodal solutions of weighted indefinite problems}

\author{M. Fencl}
\address{Department of Mathematics and NTIS, Faculty of Applied Sciences,
University of West Bohemia, Univerzitn\'i 8, 30100, Plzen, Czech Republic}
\email{fenclm37@ntis.zcu.cz}
\author{J. L\'{o}pez-G\'{o}mez}
\address{Institute of Inter-disciplinar Mathematics (IMI) and Department of Analysis and Applied Mathematics, Complutense University of Madrid, Madrid 28040, Spain}
\email{Lopez$\_$Gomez@mat.ucm.es}

\maketitle

\centerline{\it This paper is dedicated to M. Hieber}

\centerline{\it at the occasion of his 60th birthday}

\centerline{\it mit Wertsch\"{a}tzung und Freundschaft}

\begin{abstract}
This paper analyzes the structure of the set of nodal solutions of a class of one-dimensional
superlinear indefinite boundary values problems with an indefinite weight functions in front of
the spectral parameter. Quite astonishingly, the associated high order eigenvalues might not
be concave as it is the lowest one. As a consequence, in many circumstances the nodal solutions
can bifurcate from three or even four bifurcation points from the trivial solution. This paper combines
analytical and numerical tools. The analysis carried over on it is a paradigm of how mathematical analysis aids the numerical study of a problem, whereas simultaneously the numerical study confirms and illuminate the analysis.
\vspace{0.1cm}

\noindent \emph{2020 MSC: 34B15,  34B08, 34L16.}
\vspace{0.1cm}

\noindent \emph{Keywords and phrases: superlinear indefinite problems, weighted problems, positive
solutions, nodal solutions, eigencurves, concavity, bifurcation, global components, path-following,
pseudo-spectral methods, finite-differences scheme.  }
\vspace{0.1cm}

\noindent Partially supported by the Research Grant  PGC2018-097104-B-I00 of the Spanish Ministry of Science, Innovation and Universities,  and the Institute of Inter-disciplinar Mathematics (IMI) of Complutense University. M. Fencl has been supported by the project SGS-2019-010 of the University of West Bohemia, the project 18-03253S of the Grant Agency of the Czech Republic and the project LO1506 of the Czech Ministry of Education, Youth and Sport.
\end{abstract}

\section{Introduction}

\noindent In this paper we analyze the nodal solutions of the one-dimensional nonlinear weighted boundary value problem
\begin{equation}
\label{1.1}
  \left\{ \begin{array}{l} -u''-\mu u = \l m(x) u-a(x) u^2  \quad \hbox{in} \;\; (0,1), \\[1ex]
  u(0)=u(1)=0, \end{array}\right.
\end{equation}
where $a, m \in \mathcal{C}[0,1]$ are functions that change sign in $(0,1)$  and $\l, \mu \in\R$ are regarded as bifurcation parameters. More precisely, $\l$ is the primary parameter, and $\mu$ the secondary one. All the numerical experiments carried out in this paper have been implemented in the special case when
\begin{equation}
\label{1.2}
  a(x):=\left\{ \begin{array}{ll} -0.2 \sin\left( \frac{\pi}{0.2}(0.2-x)\right) & \quad
  \hbox{if}\;\; 0\leq x \leq 0.2, \\[2ex] \sin\left( \frac{\pi}{0.6}(x-0.2)\right)
  & \quad   \hbox{if}\;\; 0.2< x \leq 0.8, \\[2ex] -0.2 \sin\left( \frac{\pi}{0.2}(x-0.8)\right) & \quad
   \hbox{if}\;\; 0.8 < x \leq 1,\end{array}\right.
\end{equation}
because this is the weight function $a(x)$ considered by L\'{o}pez-G\'{o}mez and Molina-Meyer in  \cite{LGMM} to compute the global bifurcation diagrams of positive solutions there in. In this paper we pay
a very special attention to the particular, but very interesting, case when
$$
  m(x)= \sin (n\pi x)
$$
for some integer $n\geq 2$.
\par
Up to the best of our knowledge, this is the first paper where the problem of the existence and the structure of the nodal solutions of a weighted superlinear indefinite problem is addressed when $m(x)$ changes of sign. The existence results of large solutions of Mawhin, Papini and Zanolin \cite{MPZ} required $m\equiv 1$, as well as the results of  L\'{o}pez-G\'{o}mez, Tellini and Zanolin \cite{LGTZ}, where the attention was focused on the problem of ascertaining the structure of the set of positive solutions. Most of the available results on nodal solutions  dealt with the special cases when $m\equiv 1$, $\mu=0$ and $a(x)$ is a positive function with $\min_{[0,1]}a>0$ (see Rabinowitz \cite{Ra1,Ra2,Ra3}), or with the degenerate case when $a(x)$ is a continuous positive function
such that $a^{-1}(0)=[\a,\b]\subset (0,1)$ (see L\'{o}pez-G\'{o}mez and Rabinowitz \cite{LGR1,LGR2,LGR3}, and L\'{o}pez-G\'{o}mez, Molina-Meyer and Rabinowitz  \cite{LGMMR}). In strong contrast with the classical cases when  $\min_{[0,1]}a>0$, in the degenerate case when $a\geq 0$ with $a^{-1}(0)=[\a,\b]\subset (0,1)$ the set of nodal solutions might consist of two, or even more, components, depending on the nature of the weight function $a(x)$ (see \cite{LGMMR} and \cite{LGR3} for any further required details).
Nevertheless, as for the special choice $a(x)$ given by \eqref{1.2}, $a(x)$ is negative in the intervals $(0,0.2)$ and $(0.8,1)$, while it is positive in the central interval $(0.2,0.8)$, this is the first time that the problem of analyzing the structure of the nodal solutions in this type of superlinear indefinite problems is addressed.
\par
A natural strategy for constructing the nodal solutions of \eqref{1.1} with $n\geq 0$ interior zeroes, or nodes, consists in linearizing \eqref{1.1} at the trivial solution, $u=0$, and then searching  for the eigenvalues of the linearization having an associated eigenfunction with exactly $n$ interior nodes in $(0,1)$, for as these values of the parameters will provide us, through the local bifurcation theorem of Crandall and Rabinowitz \cite{CR}, with all the small nodal solutions of \eqref{1.1} bifurcating from $u=0$. This strategy provides us  in a rather natural way with the linear weighted eigenvalue problem
\begin{equation}
\label{1.3}
  \left\{ \begin{array}{l} -\v''-\mu \v - \l m(x) \v = \sigma \v   \quad \hbox{in} \;\; (0,1), \\[1ex]
  \v(0)=\v(1)=0. \end{array}\right.
\end{equation}
By the Sturm--Liouville theory, the problem \eqref{1.3} has  a sequence of eigenvalues
\[
  \Sigma_n(\l,\mu):=  \s_n[-D^2-\mu-\l m(x);(0,1)],\qquad n\geq 1,
\]
which are algebraically simple. Moreover, associated with each of them there is an  eigenfunction, $\v_n$, with $\v_n'(0)>0$, unique up to a multiplicative constant, with  exactly $n-1$  interior nodes, necessarily simple,  in $(0,1)$. By uniqueness,
\begin{equation}
\label{1.4}
  \Sigma_n(\l,\mu):=  \s_n[-D^2-\l m(x);(0,1)]-\mu,\qquad n\geq 1.
\end{equation}
It turns out that the set of all the possible bifurcation points from $u=0$ to solutions of \eqref{1.1} with $n-1$ interior zeroes are provided by the values of $\l$ and $\mu$ for which
\[
  \Sigma_n(\l,\mu)=0.
\]
So, the huge interest in analyzing them. Throughout this paper, we will denote
\begin{equation}
\label{1.5}
  \Sigma_n(\l):=\Sigma(\l,0)=  \s_n[-D^2-\l m(x);(0,1)],\qquad n\geq 1.
\end{equation}
Then,
\[
  \Sigma_n(\l,\mu)=\Sigma_n(\l)-\mu
\]
and $\Sigma_n(0)=(n\pi)^2$ for all $n\geq 1$. Based on a classical result of Kato \cite{KaB} on
perturbation from simple eigenvalues,  for every $n\geq 1$, $\Sigma_n(\l)$ is analytic in $\l\in\R$.
A proof of this can be easily accomplished from \cite[Ch. 9]{LG13} and Section 5 of
Ant\'{o}n and L\'{o}pez-G\'{o}mez \cite{ALG}, where the result was established when $n=1$. An extremely important property of $\Sigma_1(\l)$ is its strict concavity with respect to the
parameter $\l$ (see Berestycki, Nirenberg and Varadhan \cite{BNV}, Cano-Casanova and L\'{o}pez-G\'{o}mez \cite{CCLG} and Chapter 9 of \cite{LG13}). According to it, $\Sigma_1'(\l)>0$ for all $\l< 0$, $\Sigma_1'(0)=0$, $\Sigma_1'(\l)<0$ for all $\l>0$, and
\begin{equation}
\label{1.6}
  \Sigma_1''(\l)<0 \quad \hbox{for all}\;\; \l\in\R.
\end{equation}
Since $\Sigma_1(0)=\pi^2$, this property entails that, for every $\mu<\pi^2$, $\Sigma_1^{-1}(\mu)$
consists of two values of $\l$,
$$
  \l_-\equiv \l_-(\mu) <0< \l_+\equiv \l_+(\mu),
$$
which are the unique bifurcation values to positive solutions from $u=0$ of \eqref{1.1} (see L\'{o}pez-G\'{o}mez and Molina-Meyer \cite{LGMM}).
Even dealing with general second order elliptic operators under general mixed boundary conditions of
non-classical type, the strict concavity of $\Sigma_1(\l)$  relies on the strong ellipticity of the
elliptic operator (see, e.g., Chapter 8 of \cite{LG13}).
\par
For analytic semigroups the spectral mapping theorem holds (see, e.g., \cite{ABHN,AN}), i.e,
\[
  \s(e^{D^2+\l m})\setminus\{0\}=e^{-\s(-D^2-\l m)}=\left\{ e^{-\sigma_n(-D^2-\l m;(0,1))}\;:\;n\geq 1\right\}.
\]
Thus, the spectral radius of the associated semigroup is given through the formula
\[
  \varrho(\l):= \mathrm{spr\,}(e^{D^2+\l m})= e^{-\sigma_1(-D^2-\l m;(0,1))}=e^{-\Sigma_1(\lambda)},\qquad \l\in\R.
\]
Hence, $\varrho(\l)$ is logarithmically convex, which is a classical property going back to
Kato \cite{KaA}, because $-\Sigma_1(\lambda)$ is convex. Rather astonishingly, there are examples of weight functions $m(x)$ for which none of the remaining eigenvalues $\Sigma_n(\l)$, $n\geq 2$, is concave with respect to $\l$. Figure \ref{Fig1} shows one of these examples for the special choice $m(x)=\sin (2\pi x)$.
\begin{figure}[h!]
\centering
\includegraphics[width=1\linewidth]{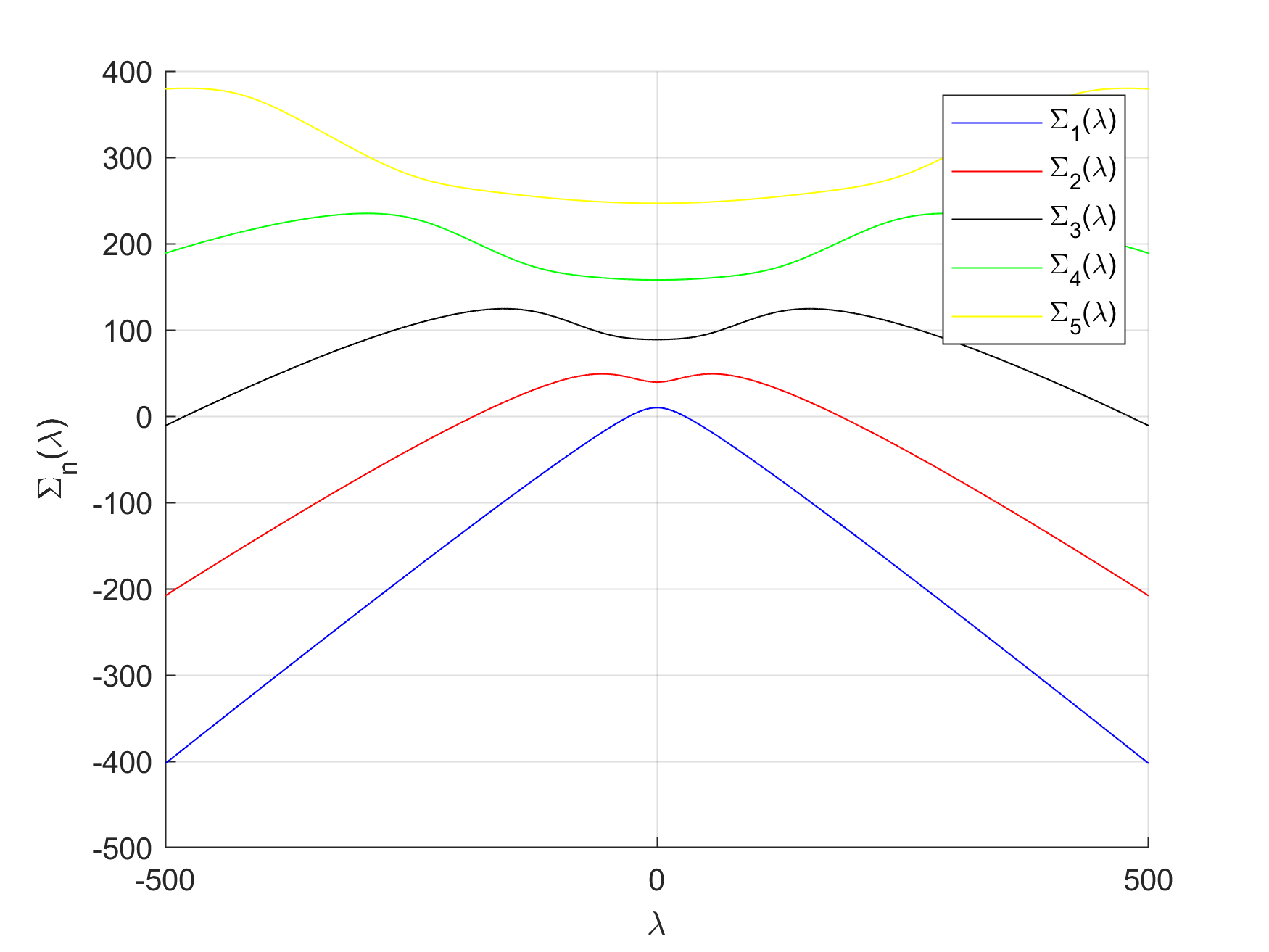}
\caption{The curves $\Sigma_n(\lambda)$ for $1\leq n\leq 5$ and $m(x)=\sin(2\pi x)$.}
\label{Fig1}
\end{figure}
In this case, $\Sigma_1(\l)$ is the unique
eigencurve which is concave, for as the remaining ones, $\Sigma_n(\l)$, $n\geq 2$, are far from concave. Indeed, all of them are symmetric functions of $\l$, with a quadratic local minimum at $\l=0$, as illustrated by Figure \ref{Fig1}. This fact has dramatic implications from the point of view of the structure of the set of nodal solutions of the problem \eqref{1.1}. Indeed, setting
\begin{equation}
\label{1.7}
  \mu_n = \max_{\l\in\R}\Sigma_n(\l),\qquad n\geq 1,
\end{equation}
it becomes apparent that $\mu_n>\Sigma_n(0)=(n\pi)^2$ for all $n\geq 1$ and, hence, for every $n\geq 2$ and any $\mu\in ((n\pi)^2,\mu_n)$, $\Sigma_n^{-1}(\mu)$ consists of two negative eigenvalues,
$\l_{-[1,n]}(\mu)<\l_{-[2,n]}(\mu)<0$, and two positive eigenvalues $0<\l_{+[2,n]}(\mu)<\l_{+[1,n]}(\mu)$ such that
$$
  0< \l_{+[2,n]}(\mu)=-\l_{-[2,n]}(\mu)<\l_{+[1,n]}(\mu)=-\l_{-[1,n]}(\mu).
$$
Therefore, for this range of $\mu$'s we expect that the solutions with $n-1$ interior nodes of
\eqref{1.1} will bifurcate from the trivial solution at each of the four values
\[
  \l =\l_{\pm[i,n]},\quad i=1,2.
\]
By simply having a look at Figure \ref{Fig1}, it is easily realized that
\[
  \l_{\pm[2,n]}((n\pi)^2)=0.
\]
Moreover,
\[
  \l_{-[1,n]}(\mu_n)=\l_{-[2,n]}(\mu_n)< 0< \l_{+[2,n]}(\mu_n)=\l_{+[1,n]}(\mu_n),
\]
at least for $n\in\{2,3,4,5\}$.
\par
As illustrated by Figure \ref{Fig2}, the number of eigencurves, $\Sigma_n(\l)$, $n\geq 2$, which are
concave in $\l$ might vary with the weight function $m(x)$. Indeed, when $m(x)=\sin(4\pi x)$, it turns out that not only $\Sigma_1(\l)$ but also $\Sigma_2(\l)$  is strictly concave, while the remaining eigencurves, $\Sigma_n(\l)$, with $n\geq 3$, are not concave. Similarly, when
$m(x)=\sin(6\pi x)$, then $\Sigma_j(\l)$ are concave for $j\in\{1,2,3\}$, while they are not concave for $j\geq 4$.
\par
\begin{figure}[h!]
\centering
\includegraphics[width=0.51\linewidth]{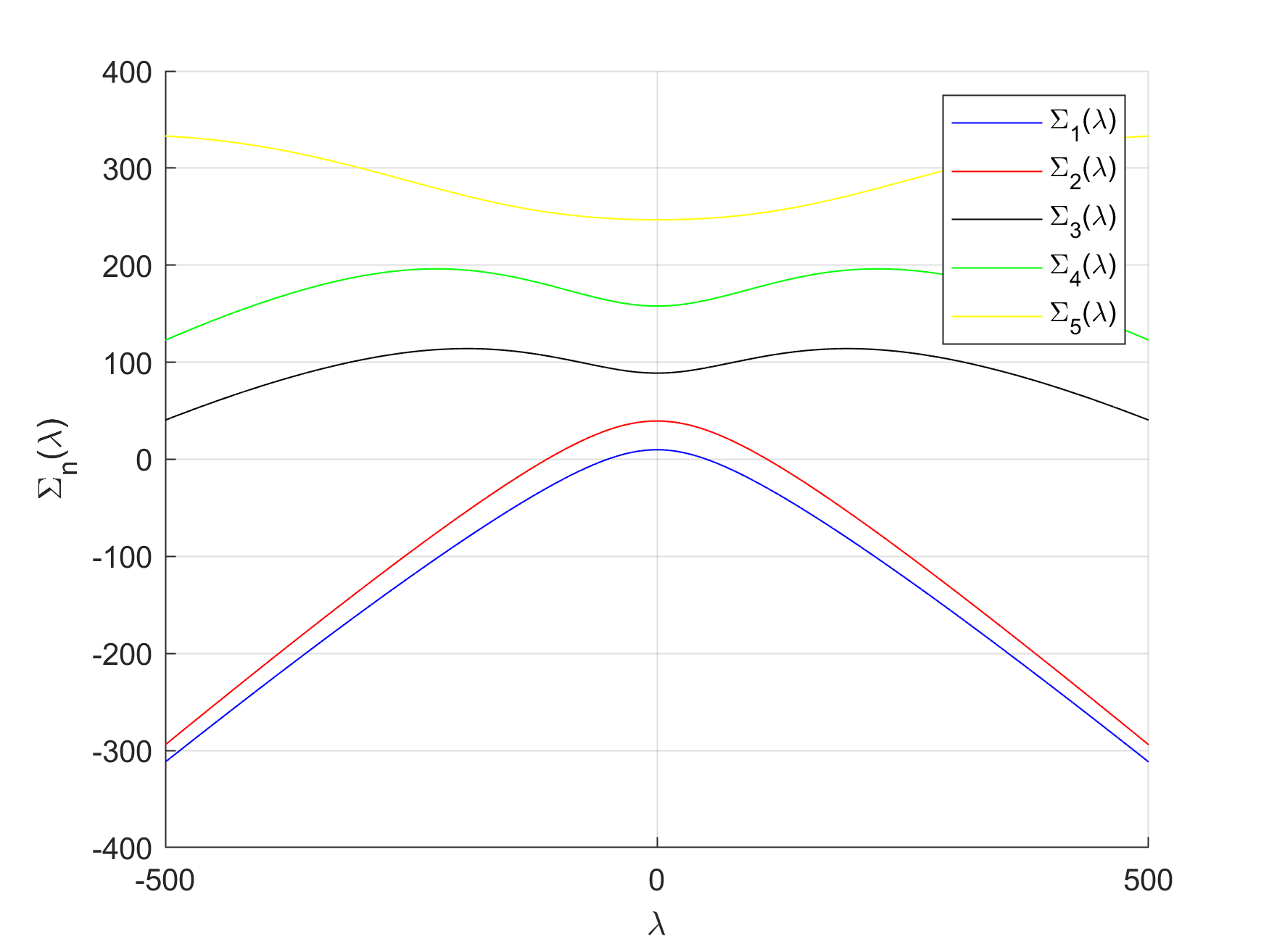}\hspace{-0.5cm} \includegraphics[width=0.51\linewidth]{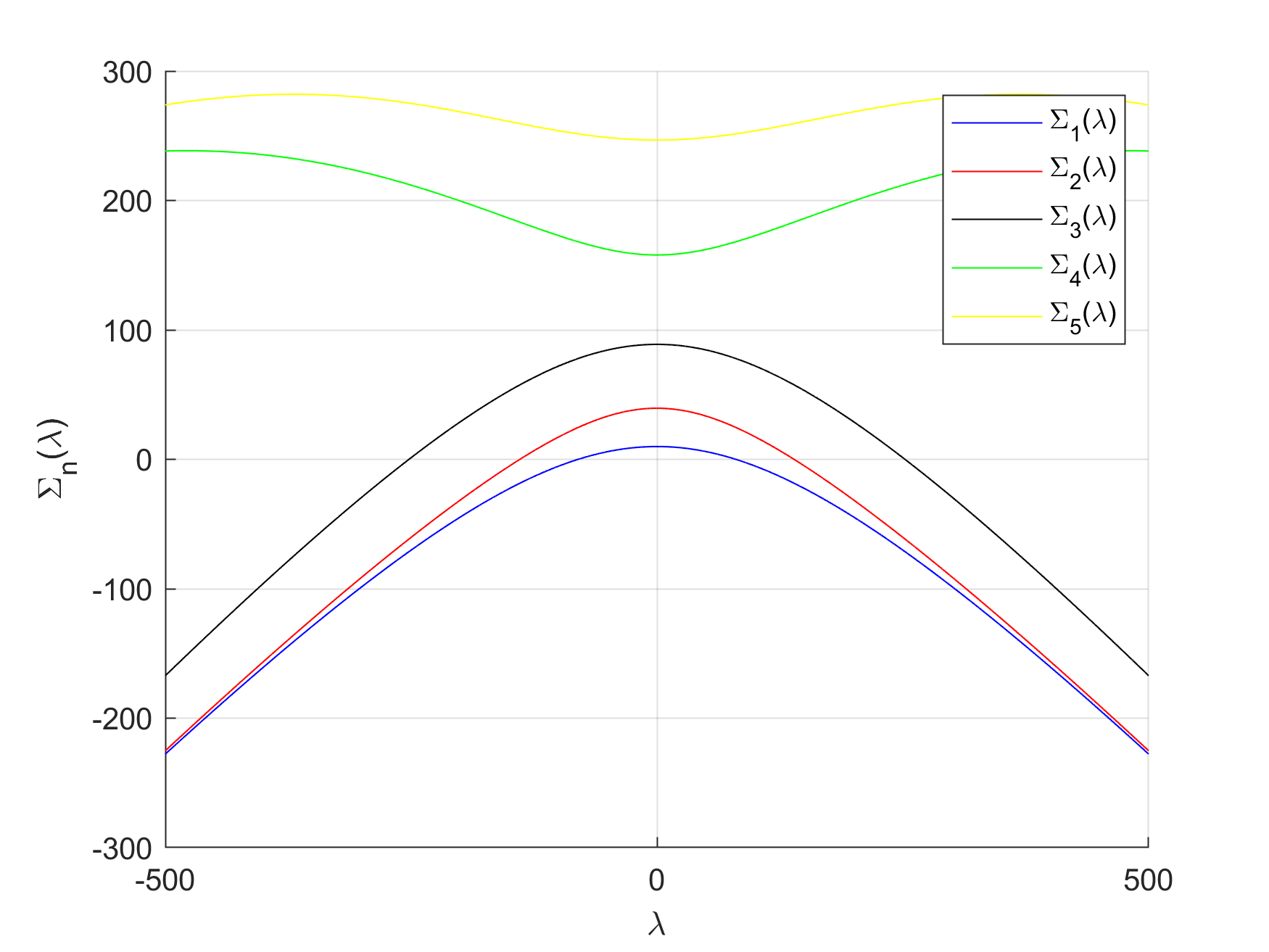}
\caption{The curves $\Sigma_n(\lambda)$ for $1\leq n\leq 5$ with $m(x)=\sin(n\pi x)$, $n=4, 6$.}
\label{Fig2}
\end{figure}
Quite astonishingly, as suggested by our numerical computations, the more wiggled is $m(x)$ the higher number of modes $\Sigma_n(\l)$ is concave. This astonishing feature might have some important implications in quantum mechanics.
\par
The distribution of this paper is as follows. Section 2 studies some global properties of the eigencurves $\Sigma_n(\l)$ for all $n\geq 2$ and analyzes their concavities in the special case when, for some $k\geq 1$,
\begin{equation}
\label{1.8}
  m(x)=\sin (2k\pi x),\qquad x\in [0,1].
\end{equation}
Section 3 provides us some global bifurcation diagramas of nodal solutions
of \eqref{1.1} with one and two interior nodes, which are superimposed to the global bifurcation
diagrams of  positive solutions of L\'{o}pez-G\'{o}mez and Molina-Meyer \cite{LGMM}. Finally, in Section 4 we describe, very shortly, the numericical schemes used to get the global bifurcation diagrams of Section 3.

\section{Some global properties of the nodal eigencurves $\Sigma_n(\l)$}

\noindent Throughout this paper, for any given $r, s\in \R$ with $r<s$ and every continuous function
$q \in \mathcal{C}[r,s]$, we denote by $\s_n[-D^2+q(x);(r,s)]$, $n\geq 1$, the $n$-th
 eigenvalue of the eigenvalue problem
\begin{equation}
\label{2.1}
  \left\{ \begin{array}{l} -\v''+q(x) \v = \sigma \v   \quad \hbox{in} \;\; (r,s), \\[1ex]
  \v(r)=\v(s)=0. \end{array}\right.
\end{equation}
The next properties are well known (see, e.g., \cite{BGH}):
\begin{enumerate}
\item[ i)] \emph{Monotonicity of $\s_n$ with respect to $q(x)$:} If $q, \tilde q \in \mathcal{C}[r,s]$ satisfy $q\lneq \tilde q$, then
$$
  \s_n[-D^2+q(x);(r,s)]<\s_n[-D^2+\tilde q;(r,s)]\quad \hbox{for all}\;\; n\geq 1.
$$
\item[ ii)]  \emph{Monotonicity of $\s_n$ with respect to the interval:} If $[\a,\b]\subset (r,s)$, then
$$
  \s_n[-D^2+q;(r,s)]<\s_n[-D^2+q;(\a,\b)]\quad \hbox{for all}\;\; n\geq 1.
$$
\end{enumerate}
Based on these properties, as suggested by Figures \ref{Fig1} and \ref{Fig2}, the next result holds.

\begin{proposition}
\label{pr2.1}
Suppose that there exist $x_\pm\in(0,1)$ such that $\pm m(x_\pm)>0$, i.e., $m(x)$ changes the sign in $(0,1)$. Then, for every $n\geq 1$,
\begin{equation}
\label{2.2}
\lim_{\l\da -\infty}\Sigma_n(\l)=-\infty,\qquad \lim_{\l\ua \infty}\Sigma_n(\l)=-\infty.
\end{equation}
\end{proposition}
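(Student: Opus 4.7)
The plan is to derive \eqref{2.2} directly from the two monotonicity properties (i) and (ii) of $\s_n$ recalled just above the statement, together with the explicit formula for the Dirichlet spectrum of the free operator $-D^2$ on an arbitrary interval. The sign change of $m$ and its continuity will supply two compact subintervals $J_{\pm}\subset(0,1)$ on which $m$ is bounded away from zero with opposite signs; on each of them the perturbed eigenvalue gets sandwiched between a restricted eigenvalue and a constant-potential eigenvalue, and \eqref{2.2} drops out by letting $\l\to\pm\infty$.

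More precisely, since $m(x_+)>0$ and $m\in\mathcal{C}[0,1]$, there exist a compact subinterval $J_+:=[\a_+,\b_+]\subset(0,1)$ containing $x_+$ and a constant $c_+>0$ such that $m(x)\geq c_+$ for every $x\in J_+$. For every $\l>0$ this yields $-\l m(x)\leq -\l c_+$ on $J_+$. Combining property (ii) applied to $-D^2-\l m$ with $J_+\subset(0,1)$, property (i) on $J_+$, and the elementary fact that adding a constant to the potential shifts every eigenvalue by the same constant, I would obtain
$$
\Sigma_n(\l)=\s_n[-D^2-\l m;(0,1)]\leq \s_n[-D^2-\l m;J_+]\leq \s_n[-D^2-\l c_+;J_+]=\frac{n^2\pi^2}{(\b_+-\a_+)^2}-\l c_+,
$$
which tends to $-\infty$ as $\l\ua+\infty$, giving the second limit in \eqref{2.2}.

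The case $\l\da-\infty$ is entirely symmetric: pick $J_-:=[\a_-,\b_-]\subset(0,1)$ containing $x_-$ together with $c_->0$ such that $m\leq -c_-$ on $J_-$, so that $-\l m\leq \l c_-$ on $J_-$ for every $\l<0$; the same chain of inequalities then yields
$$
\Sigma_n(\l)\leq \frac{n^2\pi^2}{(\b_--\a_-)^2}+\l c_-\;\longrightarrow\;-\infty\quad\hbox{as}\;\;\l\da-\infty.
$$
No step is really an obstacle; the whole argument is a two-line sandwich estimate that uses only the continuity of $m$ and its sign change, and it is completely independent of the finer structure (such as $m(x)=\sin(n\pi x)$) to be exploited later in the section. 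The only marginal subtlety is guaranteeing the strict inequality $-\l m\lneq -\l c_\pm$ needed by property (i), which one arranges either by shrinking $c_\pm$ slightly below $\min_{J_\pm}|m|$ or, more trivially, by reading property (i) in its non-strict form, which is all that is needed here.
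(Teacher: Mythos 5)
Your proof is correct and follows essentially the same argument as the paper: restrict to a compact subinterval where $m$ has a definite sign, use the monotonicity of $\s_n$ in the domain and in the potential, and compute the explicit Dirichlet eigenvalue of $-D^2$ on that subinterval to get a linear-in-$\l$ upper bound tending to $-\infty$. The only cosmetic difference is that the paper handles $\l\da-\infty$ by applying the $\l\ua\infty$ case to $-m$, whereas you write out the symmetric estimate explicitly.
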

\begin{proof}
Consider a sufficiently small $\e>0$ such that
$$
  J_\e:=[x_+-\e,x_++\e]\subset (0,1),\qquad \min_{J_\e}m=m_L>0.
$$
Then, by the monotonicity properties of $\Sigma_n$, for every $\l>0$ and
$n\geq 1$, we have that
\begin{align*}
  \Sigma_n(\l) & = \s_n[-D^2-\l m(x);(0,1)]< \s_n[-D^2-\l m(x);J_\e]\\ & <
  \s_n[-D^2-\l m_L;J_\e]=\s_n[-D^2;J_\e]-\l m_L = \left(\frac{n\pi}{2\e}\right)^2-\l m_L.
\end{align*}
Thus, letting $\l\ua \infty$, the second relation of \eqref{2.2} holds. The first one follows by applying this result to the weight function $-m(x)$. This ends the proof.
\end{proof}

The fact that all the eigencurves plotted in Figures \ref{Fig1} and \ref{Fig2} are symmetric about the
ordinate axis is a direct consequence of the next general result, because
\[
  \sin\left(2k\pi(1-x)\right)=-\sin (2k\pi x),
\]
for all integer $k\geq 1$ and $x\in [0,1]$.

\begin{proposition}
\label{pr2.2}
Suppose that $m\neq 0$ is a continuous function in $[0,1]$ such that
\begin{equation}
\label{2.3}
  m(1-x)=-m(x)\quad \hbox{for all}\;\; x\in [0,1];
\end{equation}
this holds under condition $(\ref{1.8})$. Then, $\Sigma_n(-\l)=\Sigma_n(\l)$ for all  $\l\in\R$ and any integer $n\geq 1$. In particular,
\begin{equation}
\label{2.4}
   \dot{ \Sigma}_n(0)=0\quad \hbox{for all} \;\; n\geq 1,
\end{equation}
where we are denoting $\dot{ \Sigma}_n=\frac{d\Sigma_n}{d\l}$.
\end{proposition}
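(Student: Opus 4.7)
The plan is to exploit the reflection symmetry $y=1-x$ to set up a one-to-one correspondence between the eigenvalue problem with weight $-\l m(x)$ and the one with weight $+\l m(x)$, and then invoke the uniqueness part of Sturm--Liouville theory (recalled just before \eqref{1.4}) to conclude that the $n$-th eigenvalues agree.

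Concretely, first I would fix $\l\in\R$ and $n\geq 1$ and let $\v_n$ denote an eigenfunction associated with $\Sigma_n(\l)=\s_n[-D^2-\l m(x);(0,1)]$, normalized so that $\v_n'(0)>0$, with exactly $n-1$ simple interior nodes in $(0,1)$. Define the reflected function $\psi(x):=\v_n(1-x)$ for $x\in[0,1]$. A direct computation gives $\psi''(x)=\v_n''(1-x)$, so that using the defining equation of $\v_n$ at the point $1-x$ and the hypothesis $m(1-x)=-m(x)$,
\begin{equation*}
-\psi''(x)-(-\l)m(x)\psi(x)=-\v_n''(1-x)-\l m(1-x)\v_n(1-x)=\Sigma_n(\l)\psi(x).
\end{equation*}
Moreover $\psi$ still satisfies the Dirichlet boundary conditions $\psi(0)=\v_n(1)=0$ and $\psi(1)=\v_n(0)=0$, and it inherits from $\v_n$ exactly $n-1$ simple interior zeroes in $(0,1)$.

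Next I would invoke Sturm--Liouville theory: the only eigenvalue of $-D^2-(-\l)m(x)$ on $(0,1)$ with Dirichlet boundary conditions whose associated eigenfunction has exactly $n-1$ interior nodes is the $n$-th one. Consequently,
\begin{equation*}
\Sigma_n(\l)=\s_n[-D^2+\l m(x);(0,1)]=\s_n[-D^2-(-\l) m(x);(0,1)]=\Sigma_n(-\l),
\end{equation*}
which is \eqref{2.3}'s conclusion. The identity \eqref{2.4} then follows at once by differentiating $\Sigma_n(\l)=\Sigma_n(-\l)$ at $\l=0$, which is legitimate because $\Sigma_n$ is analytic (as recalled after \eqref{1.5}).

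I do not foresee a genuine obstacle: the substitution $y=1-x$ is elementary, the key algebraic cancellation comes from the oddness condition \eqref{2.3}, and everything else is bookkeeping with the Sturm--Liouville characterization by nodal count. The only point requiring a little care is to make sure that after reflection $\psi$ really is recognized as the $n$-th eigenfunction of the \emph{reflected} problem; this is where the simplicity of the interior zeros of $\v_n$ (guaranteed by Sturm--Liouville) is used to guarantee that $\psi$ has the correct nodal pattern and is not, say, a higher-index eigenfunction.
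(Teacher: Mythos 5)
Your proposal is correct and follows essentially the same route as the paper: reflect an $n$-th eigenfunction via $\psi(x)=\v_n(1-x)$, use the oddness of $m$ about $x=1/2$ to identify $\psi$ as an eigenfunction of the problem with parameter $-\l$ having the same nodal count, invoke the Sturm--Liouville uniqueness of the $n$-th eigenvalue, and then differentiate the resulting identity at $\l=0$ using analyticity. No gaps.
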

\begin{proof}
Since $m\neq 0$, either there exists $x_+\in (0,1)$ such that $m(x_+)>0$, or $m(x_-)<0$ for some
$x_-\in (0,1)$. Suppose the first alternative occurs. Then, by \eqref{2.3}, we also have that
$$
  m(1-x_+)=-m(x_+)<0
$$
and hence, $m(x)$  changes the sign in $(0,1)$. In particular, \eqref{2.2} holds.
\par
Pick an integer $n\geq 1$, a real number $\l$, and let $\phi_n$ be an eigenfunction associated to
$\Sigma_n(\l)$. Then, $\phi_n$ possesses $n-1$ zeros in $(0,1)$, $\phi_n(0)=\phi_n(1)=0$, and
\begin{equation*}
  -\phi_n''(x)=\l m(x) \phi_n(x)+\Sigma_n(\l)\phi_n(x)
\end{equation*}
for all $x\in (0,1)$. Thus, setting
$$
   \psi_n(x):=\phi_n(1-x),\qquad x\in [0,1],
$$
it is easily seen that
$$
   \psi_n'(x):=-\phi_n'(1-x),\quad \psi_n''(x)=\phi_n''(1-x), \qquad x\in [0,1],
$$
and hence, for every $x\in (0,1)$,
\begin{align*}
  -\psi_n''(x)  =-\phi_n''(1-x) & = \l m(1-x) \phi_n(1-x)+\Sigma_n(\l)\phi_n(1-x)\\
  & =  \l m(1-x) \psi_n(x)+\Sigma_n(\l)\psi_n(x)\\ &  = -\l m(x) \psi_n(x)+\Sigma_n(\l)\psi_n(x).
\end{align*}
Consequently, $\psi_n(x)$ is an eigenfunction associated to $-D^2+\l m(x)$ with $n-1$ interior zeros.
Therefore, by the uniqueness of $\Sigma_n$, it becomes apparent that
$$
  \Sigma_n(-\l)=\Sigma_n(\l)\quad \hbox{for all}\;\; \l\in\R.
$$
Since $\Sigma_n(\l)$ is an analytic function of $\l$, necessarily $\dot{\Sigma}_n(0)=0$.
This ends the proof.
\end{proof}

By having a glance at Figure \ref{Fig3}, it is easily realized that the function $\Sigma_n(\l)$ might not be an even function of $\l$ if condition \eqref{2.3} fails.
\par
\begin{figure}[h!]
	\centering
	\includegraphics[width=0.51\linewidth]{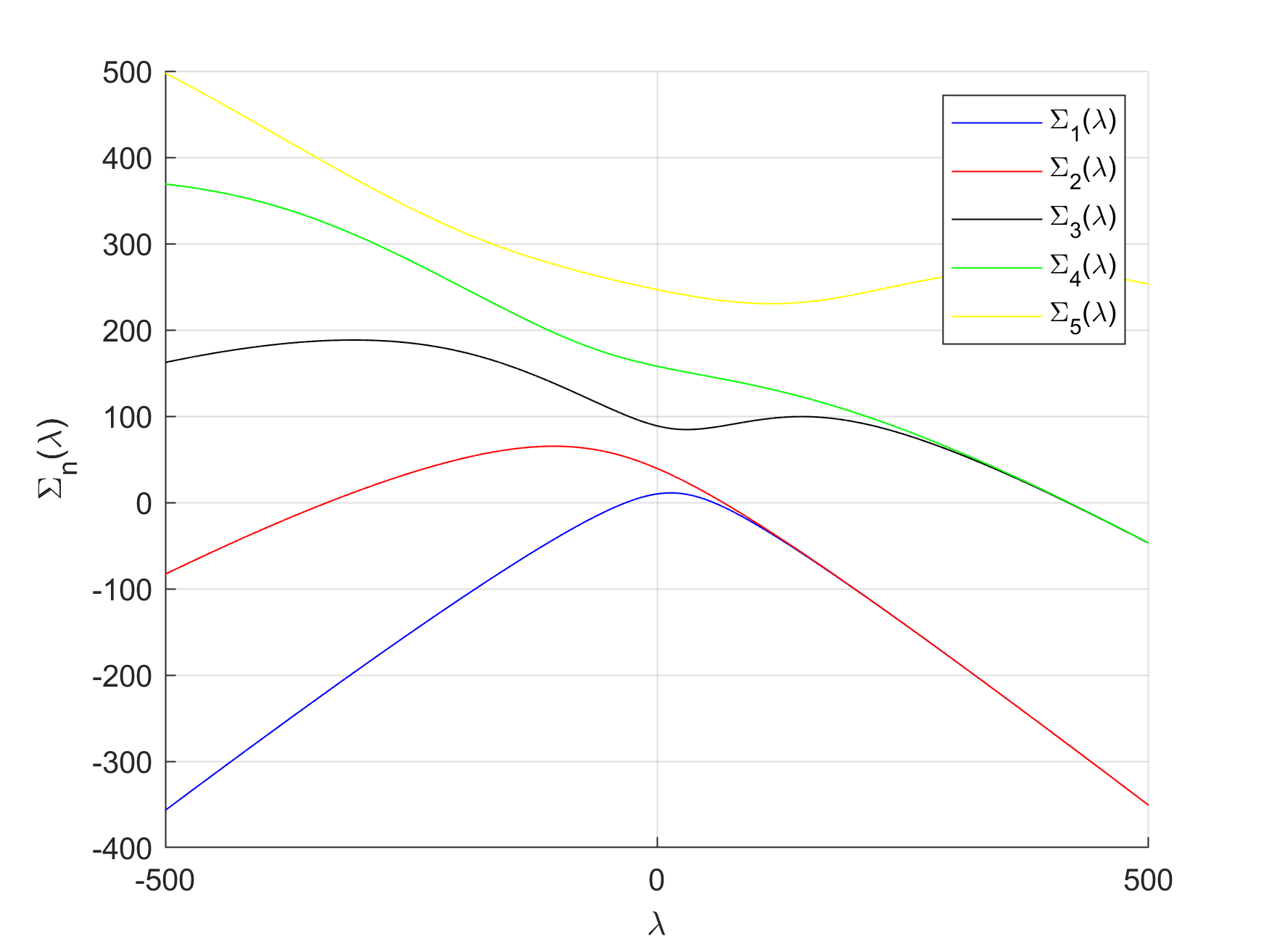}\hspace{-0.5cm} \includegraphics[width=0.51\linewidth]{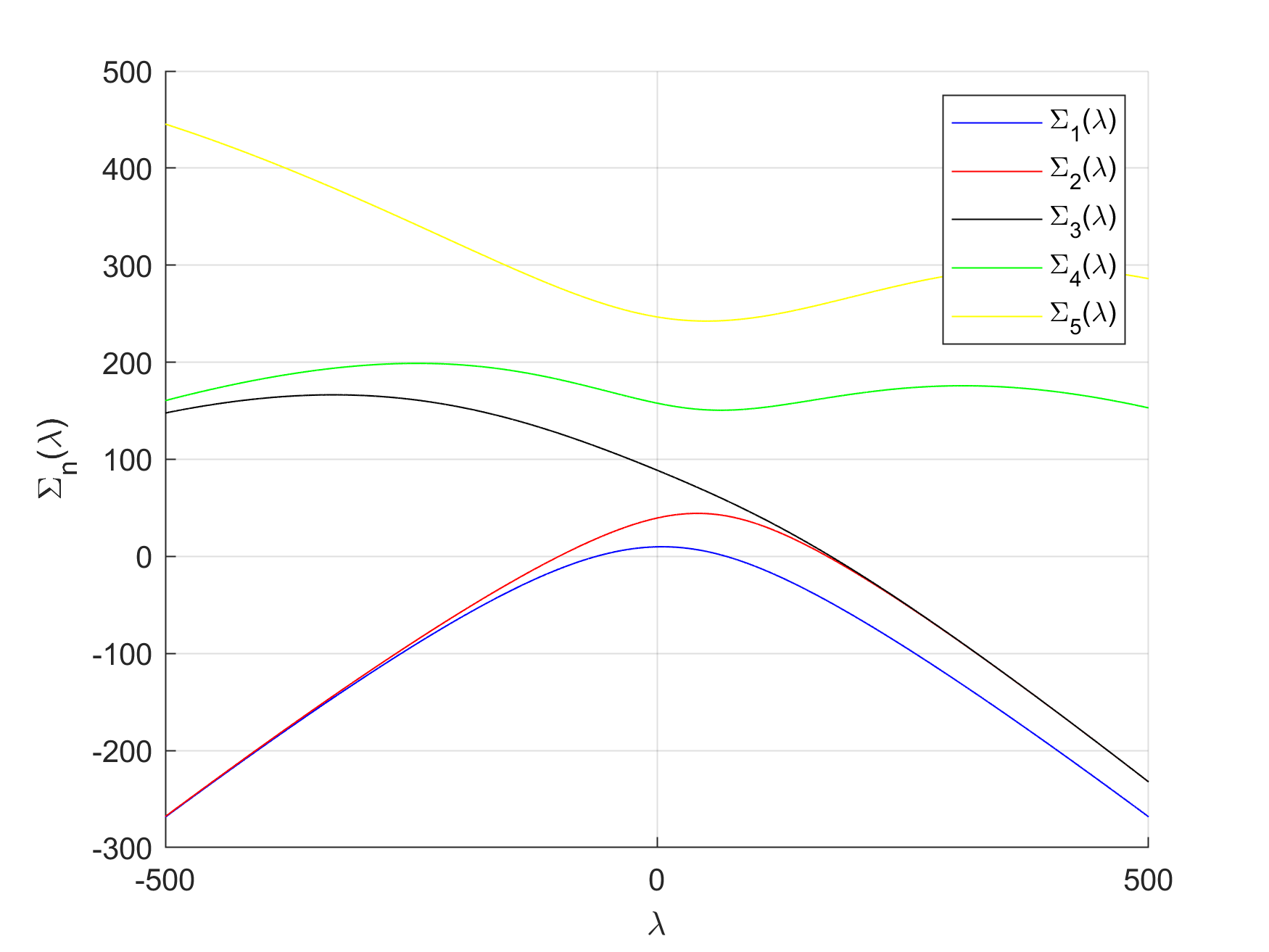}
	\caption{The curves $\Sigma_n(\lambda)$ for $1\leq n\leq 5$ with $m(x)=\sin(n\pi x)$, $n=3, 5$.}
	\label{Fig3}
\end{figure}
The next result establishes that, as already suggested by Figures \ref{Fig1} and \ref{Fig2}, the nodal
eigencurves, $\Sigma_n(\l)$, cannot be  concave for the choice \eqref{1.8} if $n\geq k+1$.  We conjecture that, in general, for that particular choice, $\Sigma_n$ is concave if $n\leq k$. Therefore,
$\Sigma_n$ should be concave if, and only if, $n\leq k$. But the analysis of the concavity
when $n\leq k$ for the choice \eqref{1.8} remains outside the general scope of this paper.

\begin{theorem}
\label{th2.1}
Assume \eqref{1.8} for some integer $k\geq 1$. Then, as soon as $n\geq k+1$,
\begin{equation}
\label{2.5}
   \ddot{ \Sigma}_n(0)>0\quad \hbox{for all} \;\; n\geq k+1.
\end{equation}
Therefore, by \eqref{2.4}, $\l=0$ is a local minimum of $\Sigma_n(\l)$ and, in particular,
$\Sigma_n(\l)$ cannot be concave.
\end{theorem}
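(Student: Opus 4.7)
The plan is to apply Kato's analytic perturbation theory to the simple eigenvalue $(n\pi)^2$ of $-D^2$ at $\lambda=0$, whose $L^2$-normalized eigenfunction is $\phi_0(x):=\sqrt{2}\sin(n\pi x)$. Since Proposition \ref{pr2.2} already gives $\dot{\Sigma}_n(0)=0$, it will suffice to prove $\ddot{\Sigma}_n(0)>0$. I would write the standard power-series expansions
\[
\Sigma_n(\lambda)=(n\pi)^2+\Sigma_n^{(2)}\lambda^2+O(\lambda^4),\qquad \phi(\lambda,x)=\phi_0(x)+\lambda\,\phi_1(x)+O(\lambda^2),
\]
with $\phi_1\perp\phi_0$ in $L^2(0,1)$, plug them into $-\phi''-\lambda m\phi=\Sigma_n(\lambda)\phi$, and match orders. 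At order $\lambda$ this gives the Poisson-type problem
\[
\bigl(-D^2-(n\pi)^2\bigr)\phi_1=m(x)\phi_0(x),\qquad \phi_1(0)=\phi_1(1)=0,
\]
which is solvable because its right-hand side is orthogonal to the kernel $\phi_0$ (an equivalent formulation of Proposition \ref{pr2.2}). Projecting the $\lambda^2$-equation onto $\phi_0$ then yields the basic identity
\[
\ddot{\Sigma}_n(0)=2\Sigma_n^{(2)}=-2\int_0^1 m(x)\phi_0(x)\phi_1(x)\,dx,
\]
so everything reduces to solving the equation for $\phi_1$ and computing this single integral.

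For $m(x)=\sin(2k\pi x)$ the product-to-sum identity yields
\[
m(x)\phi_0(x)=\frac{\sqrt{2}}{2}\bigl[\cos((n-2k)\pi x)-\cos((n+2k)\pi x)\bigr],
\]
so, provided $n\neq 2k$, a natural ansatz is
\[
\phi_1(x)=A\cos((n-2k)\pi x)+B\cos((n+2k)\pi x)-(A+B)\cos(n\pi x),
\]
where the last summand is included so that $\phi_1(0)=0$; since $\cos(2k\pi)=1$, all three cosines equal $(-1)^n$ at $x=1$, so $\phi_1(1)=0$ is automatic. For $n\geq k+1$ the frequencies $n\pm 2k$ never coincide with $\pm n$, and matching the ODE mode by mode uniquely determines
\[
A=\frac{\sqrt{2}}{8k(k-n)\pi^2},\qquad B=-\frac{\sqrt{2}}{8k(k+n)\pi^2}.
\]
Substituting $\phi_1$ back into $-2\int_0^1 m\phi_0\phi_1\,dx$ and reducing each three-factor integrand by a pair of product-to-sum identities, only a handful of terms survive and the computation collapses to the closed form
\[
\ddot{\Sigma}_n(0)=\frac{1}{4\pi^2(n^2-k^2)},
\]
which is positive precisely because $n\geq k+1>k$.

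The borderline case $n=2k$ must be handled separately: the cosine $\cos((n-2k)\pi x)$ degenerates to a constant, the inner product $\int_0^1 \sin(2k\pi x)\sin(n\pi x)\,dx$ switches from $0$ to $\tfrac12$, and the ansatz above has to carry a free constant in place of that cosine. Repeating exactly the same steps yields $\ddot{\Sigma}_{2k}(0)=\tfrac{5}{24k^2\pi^2}>0$, so \eqref{2.5} holds in all cases. The main obstacle throughout is bookkeeping: one has to keep careful track of when the elementary integrals $\int_0^1\sin(a\pi x)\sin(b\pi x)\cos(c\pi x)\,dx$ survive the parity and frequency constraints forced by the choice \eqref{1.8}, and cleanly separate the resonant case $n=2k$ from the generic one; once this is laid out, everything else is a short direct computation.
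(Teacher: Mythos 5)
Your proof is correct and follows the same basic strategy as the paper's: an analytic (Rayleigh--Schr\"{o}dinger) expansion of the simple eigenvalue about $\lambda=0$, reduction to the second-order coefficient $\ddot{\Sigma}_n(0)=-2\int_0^1 m\,\phi_0\phi_1\,dx$ via the corrector equation $(-D^2-(n\pi)^2)\phi_1=m\phi_0$, and an explicit trigonometric evaluation. You differ only in how the corrector is computed: the paper uses variation of parameters to produce the integral representation $p(x)$ of \eqref{2.12} and then reduces it by product-to-sum identities, while you solve the corrector equation by undetermined coefficients in the basis $\{\cos((n\pm 2k)\pi x),\cos(n\pi x)\}$, which is cleaner and makes the frequency bookkeeping transparent. (One point you leave implicit but which is harmless: your explicit $\phi_1$ omits the homogeneous piece $c\sin(n\pi x)$ that would be needed to enforce the normalization $\phi_1\perp\phi_0$; since $\int_0^1 m\,\phi_0\sin(n\pi x)\,dx=\sqrt{2}\int_0^1\sin(2k\pi x)\sin^2(n\pi x)\,dx=0$ by oddness about $x=1/2$, this piece does not affect the final integral.) The genuinely valuable difference is your separate treatment of $n=2k$. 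There the overlap $\int_0^1\cos^2((n-2k)\pi x)\,dx$ jumps from $\tfrac12$ to $1$, so the paper's uniform closed form $\ddot{\Sigma}_n(0)=\frac{1}{4\pi^2(n^2-k^2)}$ is not valid at $n=2k$: the correct value there is your $\ddot{\Sigma}_{2k}(0)=\frac{5}{24k^2\pi^2}$ (for $k=1$, $n=2$ the second-order perturbation sum gives approximately $0.0211=\frac{5}{24\pi^2}$, not $\frac{1}{12\pi^2}$). Since both expressions are positive, \eqref{2.5} and the theorem are unaffected, but your case split is needed for a complete argument and corrects an oversight in the paper's final formula.
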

\begin{proof}
Since $\Sigma_n(\l)$ is algebraically simple for all $n\geq 1$, we already know that $\Sigma_n(\l)$ is analytic, by some well known perturbation results of Kato \cite{KaA}.
Moreover, the eigenfunction associated to $\Sigma_n(\l)$, denoted by $\v_{[n,\l]}$, can be chosen to be analytic in $\l$ by normalizing it so that
\begin{equation}
\label{2.6}
  \int_0^1 \v_{[n,\l]}^2(x)\,dx =\frac{1}{2}.
\end{equation}
By definition, $\v_{[n,\l]}(0)=\v_{[n,\l]}(1)=0$ and
\begin{equation}
\label{2.7}
   -\v_{[n,\l]}''(x)=\l m(x) \v_{[n,\l]}(x)+\Sigma_n(\l)\v_{[n,\l]}(x) \quad
   \hbox{for all}\;\; x\in (0,1).
\end{equation}
Thus, since $\Sigma_n(0)=(n\pi)^2$, particularizing \eqref{2.7} at $\l=0$ and taking into account \eqref{2.6}, it becomes apparent that  actually $\v_{[n,\l]}$ is an analytic perturbation of the eigenfunction
\begin{equation*}
  \v_{[n,0]}(x)=\sin(n\pi x),\qquad x\in [0,1].
\end{equation*}
Moreover, differentiating \eqref{2.7} with respect to $\l$ yields
\begin{equation}
\label{2.8}
    -\dot{\v}_{[n,\l]}''(x)=\l m \dot{\v}_{[n,\l]}+m\v_{[n,\l]} +\dot{\Sigma}_n(\l)\v_{[n,\l]}
    +\Sigma_n(\l) \dot{\v}_{[n,\l]}\quad \hbox{in}\;\; (0,1).
\end{equation}
Thus, since $\Sigma_n(0)=(n\pi)^2$ and $\dot{\Sigma}_n(0)=0$, particularizing \eqref{2.8} at $\l=0$
shows that $\dot{\v}_{[n,0]}$ solves the problem
\begin{equation}
\label{2.9}
   \left\{ \begin{array}{l} [-D^2-(n\pi)^2]u=m\v_{[n,0]}  \quad \hbox{in}\;\; (0,1),\\[1ex]
   u(0)=u(1)=0. \end{array}  \right.
\end{equation}
In order to find out $\dot{\v}_{[n,0]}$, we first determine the general solution of the linear
inhomogeneous equation
\begin{equation}
\label{2.10}
    [-D^2-(n\pi)^2]u=m(x)\sin (n\pi x).
\end{equation}
To get it, we will set $v:=u'$ in order to vary coefficients in the first order system associated to \eqref{2.10},\begin{equation}
\label{2.11}
  \left( \begin{array}{c} u' \\[1ex] v' \end{array} \right) =
   \left( \begin{array}{cc} 0 & 1 \\[1ex] -(n\pi)^2 & 0 \end{array} \right)
   \left( \begin{array}{c} u \\[1ex] v \end{array} \right)+
    \left( \begin{array}{c} 0 \\[1ex] -m(x)\sin(n\pi x)\end{array} \right).
\end{equation}
Since
$$
  W(x):= \left( \begin{array}{cc} \cos(n\pi x) & \sin (n\pi x) \\[1ex]
  -n\pi \sin(n\pi x) & n\pi \cos (n\pi x) \end{array}\right)
$$
is a fundamental matrix of solutions for the homogeneous linear system associated to \eqref{2.11},
the change of variable
$$
  \left( \begin{array}{c} u \\[1ex] v \end{array} \right) = W(x) \left( \begin{array}{c} c_1(x) \\[1ex] c_2(x) \end{array} \right)
$$
transforms \eqref{2.11} into the equivalent system
$$
  W(x) \left( \begin{array}{c} c_1'(x) \\[1ex] c_2'(x) \end{array} \right) =
  \left( \begin{array}{c} 0 \\[1ex] -m(x)\sin(n\pi x)\end{array} \right),
$$
whose solution, according to Cramer rule, is given through
\begin{equation*}
  c_1'(x) = \frac{1}{n\pi}m(x)\sin^2(n\pi x),\qquad
  c_2'(x)=  \frac{-1}{n\pi}m(x)\sin(n\pi x)\cos(n\pi x).
\end{equation*}
Thus,
\begin{align*}
  c_1(x) & =\frac{1}{n\pi} \int_0^x m(s)\sin^2(n\pi s)\,ds + A, \\[1ex]
  c_2(x) & =  -\frac{1}{n\pi}\int_0^x m(s)\sin(n\pi s)\cos(n\pi s)\,ds +B,
\end{align*}
for some constants $A, B \in\R$. Therefore, the general solution of \eqref{2.10} is given by
\begin{align*}
  u(x) & = \cos(n\pi x)c_1(x)+\sin(n\pi x)c_2(x) \\[1ex] & =
  \cos(n\pi x) \left( A+\frac{1}{n\pi} \int_0^x m(s)\sin^2(n\pi s)\,ds\right) \\[1ex]
  & \;\;\;   +\sin(n\pi x) \left( B - \frac{1}{n\pi}\int_0^x m(s)\sin(n\pi s)\cos(n\pi s)\,ds\right) \\[1ex] & = A \cos(n\pi x)+ B \sin (n\pi x) + p(x),
\end{align*}
where
\begin{equation}
\label{2.12}
  p(x):=  \frac{1}{n\pi}\int_0^x m(s) \sin(n\pi s) \sin[n\pi(s-x)]\,ds,\qquad x\in[0,1],
\end{equation}
is a particular solution of \eqref{2.10}. It is the solution obtained by making the choice $A=B=0$.  Obviously, $p(0)=0$. Moreover, by \eqref{1.8},
\begin{align*}
  p(1) & = \int_0^1 m(s) \sin(n\pi s)  \sin[n\pi(s-1)] \,ds \\[1ex]
  & = (-1)^{n} \int_0^1 \sin(2k\pi s) \sin^2(n\pi s)\,ds =0,
\end{align*}
because the integrand,
$$
  \t(s):=\sin(2k\pi s) \sin^2(n\pi s),\qquad s\in [0,1],
$$
satisfies $\t(1-s)=-\t(s)$ for all $s\in [0,1]$ and hence, it is odd about $0.5$.
As we are interested in solving  \eqref{2.9},  we should make the choice
$$
  0 = u(0)= A + p(0)=A.
$$
Thus,
\begin{equation*}
  \dot{\v}_{[n,0]}(x)= B\sin(n\pi x)+p(x), \qquad x\in [0,1],
\end{equation*}
for some constant $B\in\R$. To determine $B$, we can proceed as follows. Differentiating
\eqref{2.6} with respect to $\l$ and particularizing the resulting identity at $\l=0$ yields
$$
  0= \int_0^1 \v_{[n,0]}(x)\dot{\v}_{[n,0]}(x)\,dx =B\int_0^1 \sin^2(n\pi x)\,dx +
  \int_0^1 \sin(n\pi x)p(x)\,dx.
$$
Consequently,
$$
  B=-2\int_0^1 \sin(n\pi x)p(x)\,dx
$$
and therefore,
\begin{equation}
\label{2.13}
  \dot{\v}_{[n,0]}(x)= -2\left( \int_0^1 \sin(n\pi s)p(s)\,ds \right)
  \sin(n\pi x)+p(x), \qquad x\in [0,1].
\end{equation}
To find out $\ddot{\Sigma}_n(0)$, we can differentiate with respect to $\l$ the identity \eqref{2.8}.
After rearranging terms, this provides us with the identity
\begin{equation*}
  [-D^2-\l m -\Sigma_n(\l)]\ddot{\v}_{[n,\l]}= 2 m \dot{\v}_{[n,\l]}+2 \dot{\Sigma}_n(\l) \dot{\v}_{[n,\l]}+
  \ddot{\Sigma}_n(\l)\v_{[n,\l]}.
\end{equation*}
Thus, particularizing at $\l=0$ yields
\begin{equation}
\label{2.14}
  [-D^2 -(n\pi)^2]\ddot{\v}_{[n,0]}=2 m \dot{\v}_{[n,0]}+\ddot{\Sigma}_n(0)\v_{[n,0]}
\end{equation}
and hence, multiplying \eqref{2.14} by $\v_{[n,0]}$ and integrating in $(0,1)$ it is apparent that
\begin{equation}
\label{2.15}
  \ddot{\Sigma}_n(0)=-4\int_0^1 m(x) \dot{\v}_{[n,0]}(x) \v_{[n,0]}(x)\,dx.
\end{equation}
Therefore, substituting \eqref{2.13} into \eqref{2.15} and using \eqref{1.8} yields
\begin{align*}
  \ddot{\Sigma}_n(0) & =-4\int_0^1 m(x)\v_{[n,0]}(x)p(x)\,dx\\ & =
  -4\int_0^1 \sin(2k\pi x)\sin(n\pi x)\left[\frac{1}{n\pi}\int_0^x \sin(2k\pi s)\sin(n\pi s)\sin(n\pi(s-x))\,ds\right]\,dx.
\end{align*}
Finally, we need the trigonometric formulas
\begin{eqnarray}
\sin x\sin y  &=& \tfrac{1}{2}\left[\cos(x-y)-\cos(x+y)\right],\label{2.16}\\
\sin x\cos y &=& \tfrac{1}{2}\left[\sin(x-y)+\sin(x+y)\right],\label{2.17}
\end{eqnarray}
to simplify the integrands arising in integrals of  $\ddot{\Sigma}_n(0)$. First, we will ascertain the function $p(x)$. For this, we use the formula \eqref{2.16} on $\sin(2k\pi s)\sin(n\pi s)$ and then the formula \eqref{2.17} to simplify the integrand in $p(x)$. Then, integrating yields
\begin{equation}
p(x) = -\frac{1}{8 \pi^2}\left[ \frac{\cos(\pi x(2k-n))}{k(n-k)} + \frac{\cos(\pi x(2k+n))}{k(n+k)} - \frac{n \cos(n\pi x)}{k(n^2 - k^2)} \right].\label{2.18}
\end{equation}
After substituting \eqref{2.18}  into the formula for $\ddot{\Sigma}_n(0)$, we can again use the formulas  \eqref{2.16} and \eqref{2.17} to simplify the underlying integrands, which can then be directly integrated. The result can be simplified to get the final formula
\begin{equation*}
	\ddot{\Sigma}_n(0) = \frac{1}{4\pi^2 (n^2 - k^2)}.
\end{equation*}
Obviously,  $n^2 - k^2 > 0$ if $n\geq k+1$, and therefore $\ddot{\Sigma}_n(0)>0$. Hence, the eigencurves $\Sigma_n(\lambda)$ for $n\geq k+1$ are convex in a neighborhood of $\lambda = 0$ and thus they cannot be globally  concave.
\end{proof}

\section{Global bifurcation of nodal solutions}

\noindent Since $\Sigma_1(0)=\pi^2$, for every $\mu < \pi^2$ the set $\Sigma_1^{-1}(\mu)$
consists of two points,
$$
  \l_-(\mu)<0<\l_+(\mu),
$$
such that
$$
  \lim_{\mu\ua \pi^2}\l_\pm(\mu)=0.
$$
Moreover, owing to Theorem 9.4 of \cite{LG13},
$$
  \dot{\Sigma}_1(\l_-(\mu))>0 \quad \hbox{and}\quad \dot{\Sigma}_1(\l_+(\mu))<0.
$$
Thus, by the main theorem of Crandall and Rabinowitz \cite{CR} (one can see also Chapter 2 of \cite{LG01}),
$\l= \l_\pm(\mu)$ are the unique bifurcation values of $\l$ to positive solutions of \eqref{1.1} from $u=0$. The first plot of Figure 1 of L\'{o}pez-G\'{o}mez and Molina-Meyer \cite{LGMM} shows one of those  bifurcation diagrams for the special choice \eqref{1.2} of $a(x)$ with
\begin{equation}
\label{3.1}
  m(x)=\sin(2\pi x),\qquad x\in [0,1].
\end{equation}
Trying to complement the numerical experiments of \cite{LGMM} with our new findings here, all the numerical experiments of this section has been carried out for this special choice of $m(x)$.
As  $\mu$ grows up to reach the critical value $\mu=\pi^2$, the set of positive solutions of \eqref{1.1}
bifurcating from $u=0$ consists of one single closed loop bifurcating from $u=0$ at the single point $\l=0$. These loops, separated away from $u=0$, are persistent for a large range of values of $\mu>\pi^2$, until they shrink to a single point before disappearing at some critical value of the parameter $\mu$
(see \cite[Fig. 1]{LGMM}).
\par
According to Theorem \ref{th2.1}, $\Sigma_2(\l)$ is not concave if \eqref{3.1} holds, which is
clearly illustrated by simply looking at the plot of $\Sigma_2(\l)$ superimposed in Figure 1.
This feature has important implications concerning the structure of the set
of 1-node solutions of \eqref{1.1}. Indeed, according to
the plot of $\Sigma_2(\l)$, for every $\mu < (2\pi)^2$, the set $\Sigma_2^{-1}(\mu)$ consists of
two single values $\l_-(\mu)<0<\l_+(\mu)$ with $\dot{\Sigma}_2(\l_-(\mu))>0$ and
$\dot{\Sigma}_2(\l_+(\mu))<0$. Thus, according to \cite[Th. 9.4]{LG13}, the transversality condition of Crandall and Rabinowitz \cite{CR} holds at $(\l,u)=(\l_\pm(\mu),0)$. Thus, an analytic
curve of 1-node solutions of \eqref{1.1} emanates from $u=0$
at each of these values of $\l$, $\l_\pm(\mu)$. Figure \ref{Fig4a} shows
the plots of these two curves for the value of the parameter $\mu=35$. Our numerical experiments suggest that they are separated away from each other. In this bifurcation diagram, as well as in all the remaining ones, we are representing the values of the parameter $\l$, in abscisas, versus the $L^2$-norm of the computed solutions, in ordinates. So, each point on the curves of
the bifurcation diagrams, $(\l,u)$,  represent a value of $\l$ and a nodal solution $u$ of \eqref{1.1} for that particular value of $\l$.
\par
\begin{figure}[h!]
\begin{subfigure}[t]{0.45\textwidth}
	\centering
	\includegraphics[scale=0.53]{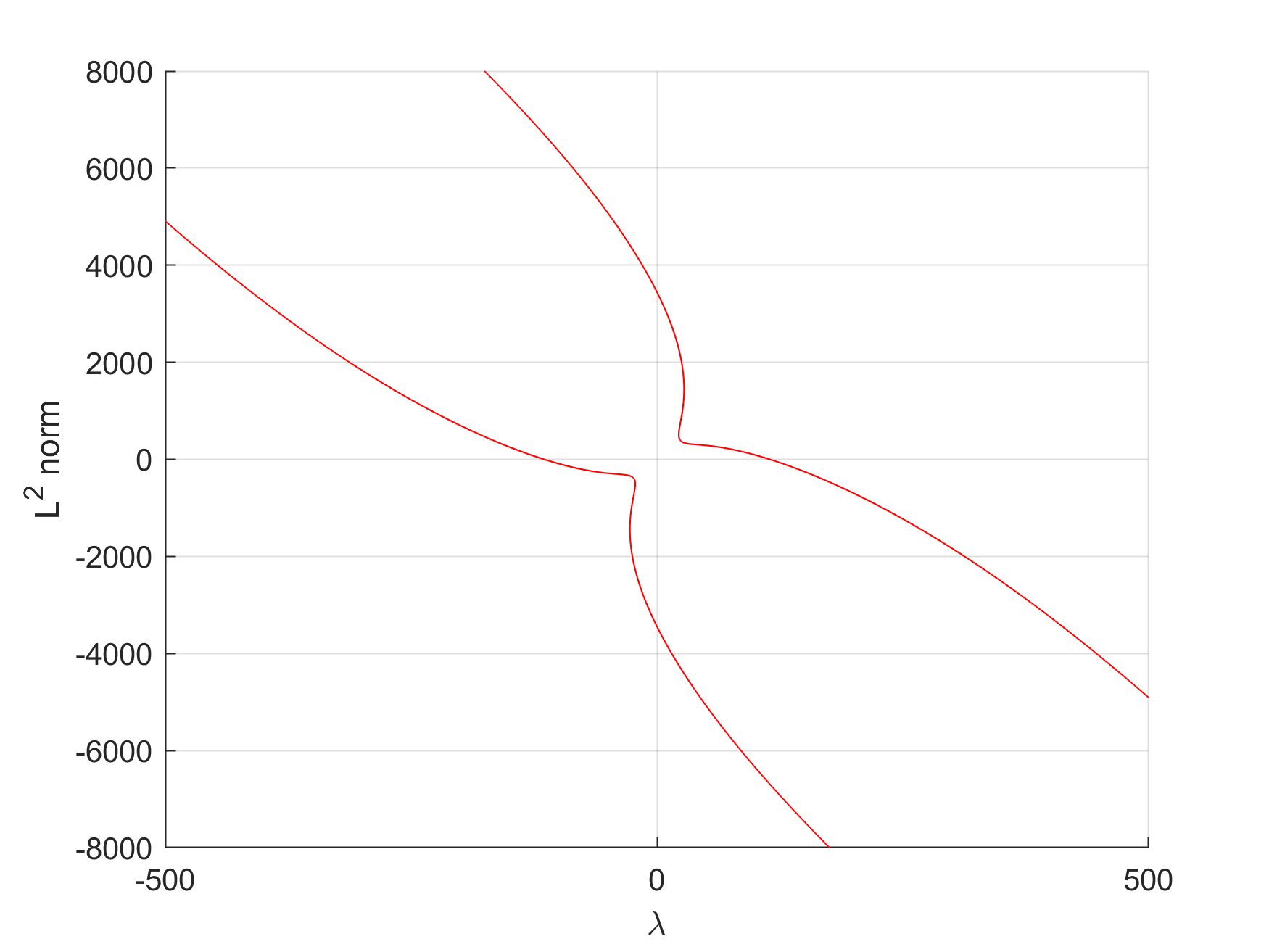}
	\caption{$\mu = 35$}\label{Fig4a}
\end{subfigure}%
\begin{subfigure}[t]{0.45\textwidth}
	\centering
	\includegraphics[scale=0.53]{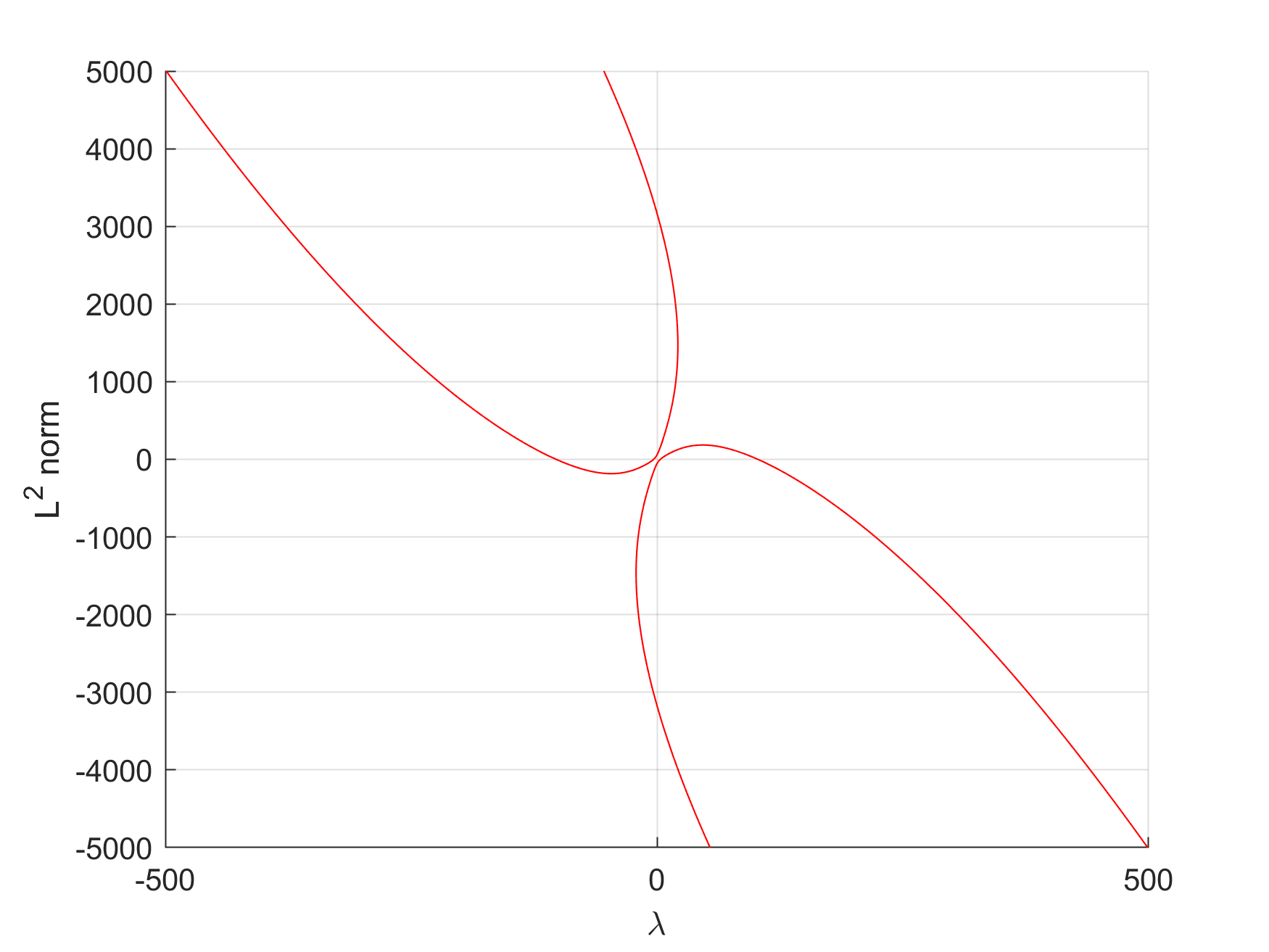}
	\caption{$\mu = 39.6$}\label{Fig4b}
\end{subfigure}

\begin{subfigure}[t]{0.45\textwidth}
	\centering
	\includegraphics[scale=0.53]{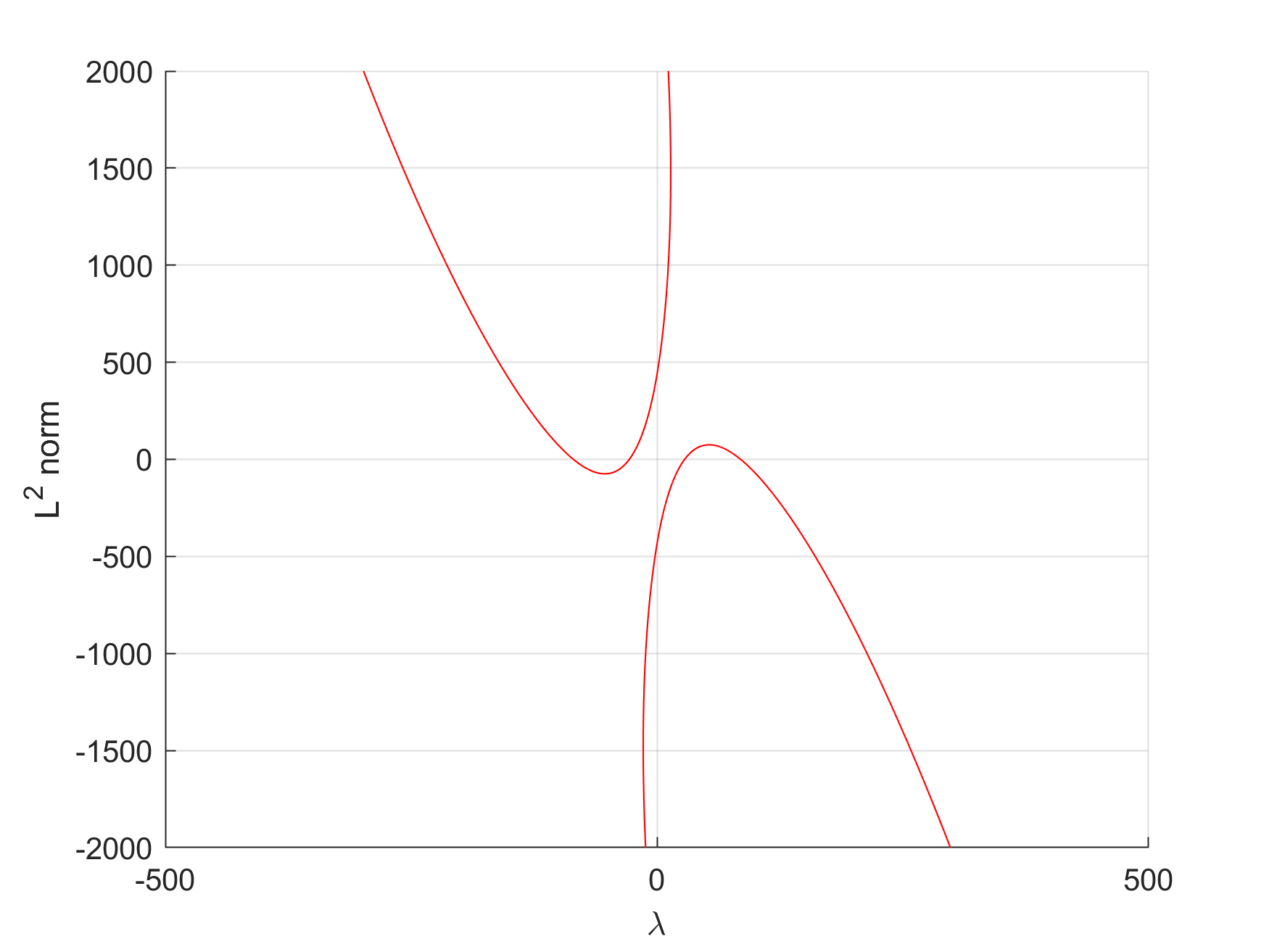}
	\caption{$\mu = 45$}\label{Fig4c}
\end{subfigure}%
\begin{subfigure}[t]{0.45\textwidth}
	\centering
	\includegraphics[scale=0.53]{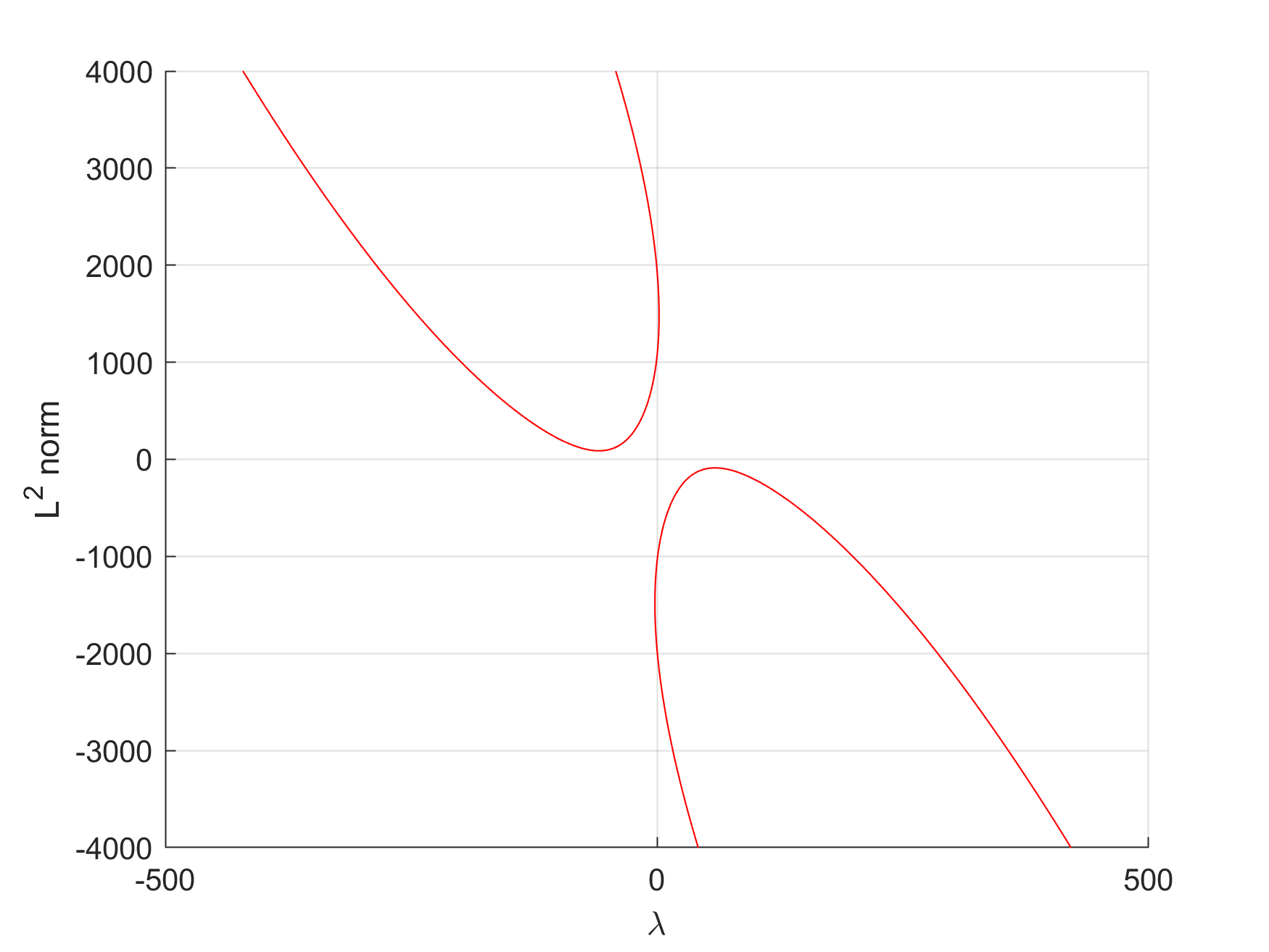}
	\caption{$\mu = 54$}\label{Fig4d}
\end{subfigure}
\caption{Four representative bifurcation diagrams of 1-node solutions.}
\label{Fig4}
\end{figure}
When $\mu$ grows up to reach the critical value $(2\pi)^2$, the two previous components become
closer and closer until they meet at $\l=0$ at $\mu=(2\pi)^2$, where the set of bifurcation points to 1-node solutions from $u=0$ consists of the points $(\l_\pm((2\pi)^2),0)$ plus $(0,0)$. This is the situation sketched by Figure \ref{Fig4b}, where we have plotted the global bifurcation diagram computed for
$$
  \mu=39.6>39.4786 \sim (2\pi)^2.
$$
When $\mu \in ((2\pi)^2,\mu_2)$, where $\mu_2$ is given by \eqref{1.7}, the set $\Sigma_2^{-1}(\mu)$ consists of four values: two negative, $\l_{-[1,2]}(\mu)<\l_{-[2,2]}(\mu)<0$,  plus two positive,  $0<\l_{+[2,2]}(\mu)<\l_{+[1,2]}(\mu)$. Moreover, by Proposition \ref{pr2.2}, it is apparent that
$$
  0< \l_{+[2,2]}(\mu)=-\l_{-[2,2]}(\mu)<\l_{+[1,2]}(\mu)=-\l_{-[1,2]}(\mu).
$$
Furthermore, as suggested by our numerical experiments,
$$
  \dot{\Sigma}_2(\l_{-[1,2]}(\mu))>0, \quad \dot{\Sigma}_2(\l_{-[2,2]}(\mu))<0, \quad
  \dot{\Sigma}_2(\l_{+[2,2]}(\mu))>0, \quad \dot{\Sigma}_2(\l_{+[1,2]}(\mu))<0.
$$
Thus, again the transversality condition of \cite{CR} holds at each of these critical values of
the parameter $\l$. Therefore, \eqref{1.1} should possess four analytic curves filled in by  1-node
solutions bifurcating from $u=0$ at each of these critical values of the parameter $\l$.
Figure \ref{Fig4c} shows the global bifurcation diagram of 1-node solutions bifurcating from these
four bifurcation points  that we have computed for the choice $\mu =45$. Once again, the set of 1-node solutions consists of two components.
\par
Actually, as soon as the transversality condition of Crandall and Rabinowitz \cite{CR}
holds, the generalized algebraic multiplicity  of Esquinas and L\'{o}pez-G\'{o}mez \cite{ELG,LG01}, $\chi$, equals 1 and hence, thanks to Theorem 5.6.2 of L\'{o}pez-G\'{o}mez \cite{LG01}, the Leray--Schauder
index of $u=0$, as a solution of \eqref{1.1}, changes as $\l$ crosses each of these values.
Therefore, each of the components of the set of non-trivial solutions of \eqref{1.1}
emanating from $u=0$ at each of these critical values of the primary parameter $\l$ satisfies
the global alternative of Rabinowitz \cite{Ra1}, i.e., either it is unbounded in $\R \times \mc{C}[0,1]$, or it meets the trivial solution in, at least, two of these singular values.
\par
Each of the two components plotted in Figure \ref{Fig4c} bifurcates from two different points of $(\l,0)$ and, according to our numerical experiments, both seem to be unbounded.
The problem of ascertaining their precise global behavior remains open in this paper.
As $\mu$ increases and crosses the critical value $\mu_2$, these two components abandone the trivial curve and  stay separated away from the trivial solution. So, they became isolas. Figure \ref{Fig4d} shows the plots  of these components for the  choice $\mu =54$. In Figure \ref{Fig5} we have plotted
some distinguished solutions with 1-node along some of the pieces of the global bifurcation diagrams already plotted in Figure \ref{Fig4}. Precisely, Figure \ref{Fig5b} shows a series of solutions with one node along the bifurcation diagram plotted on Figure \ref{Fig5a}, which is a magnification of a piece of the left component of Figure \ref{Fig4a}, and  Figure \ref{Fig5d} shows a series of solutions with one node along the bifurcation diagram plotted in Figure \ref{Fig5c}, which is a
magnification of a piece of the left component plotted in Figure \ref{Fig4d}. The colors of each of these 1-node solutions corresponds with the color of the piece of the bifurcation diagram on the left where they
are coming from.
\begin{figure}[h!]
\begin{subfigure}[t]{0.45\textwidth}
	\centering
	\includegraphics[scale=0.53]{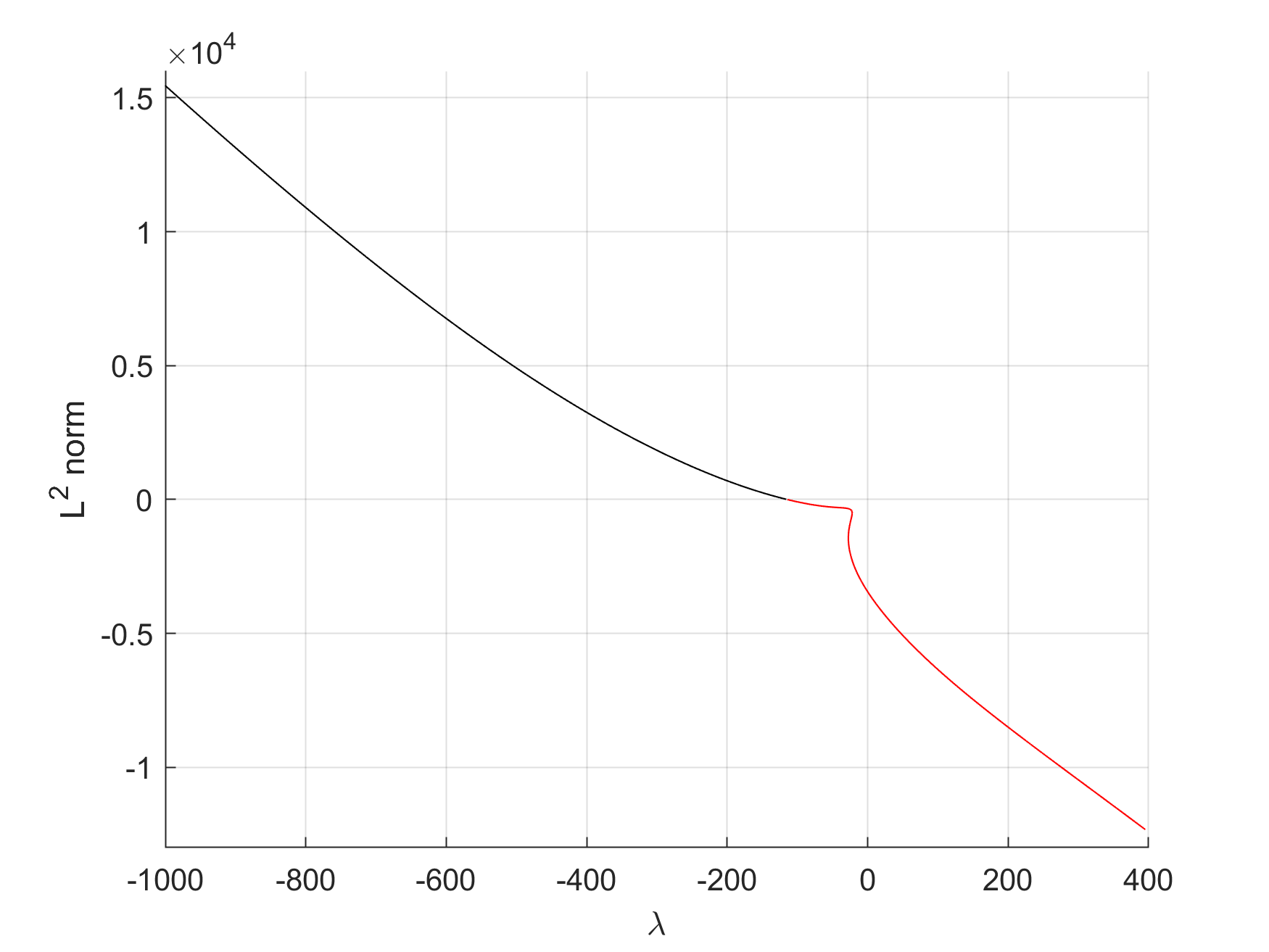}
	\caption{Left branch of Figure \ref{Fig4a} ($\mu = 35$)}\label{Fig5a}
\end{subfigure}%
\begin{subfigure}[t]{0.45\textwidth}
	\centering
	\includegraphics[scale=0.53]{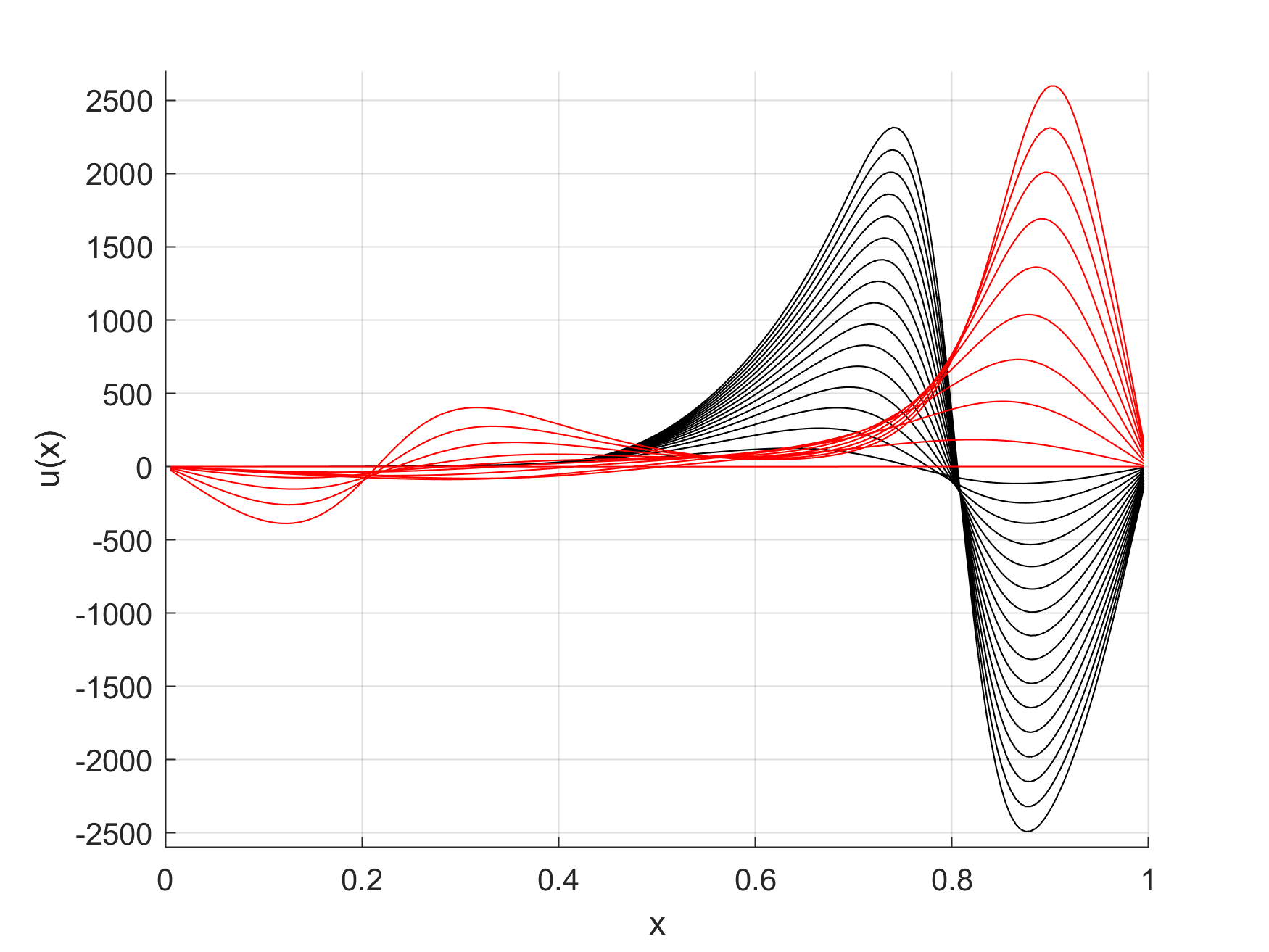}
	\caption{Plots of some solutions on the left}\label{Fig5b}
\end{subfigure}

\begin{subfigure}[t]{0.45\textwidth}
	\centering
	\includegraphics[scale=0.53]{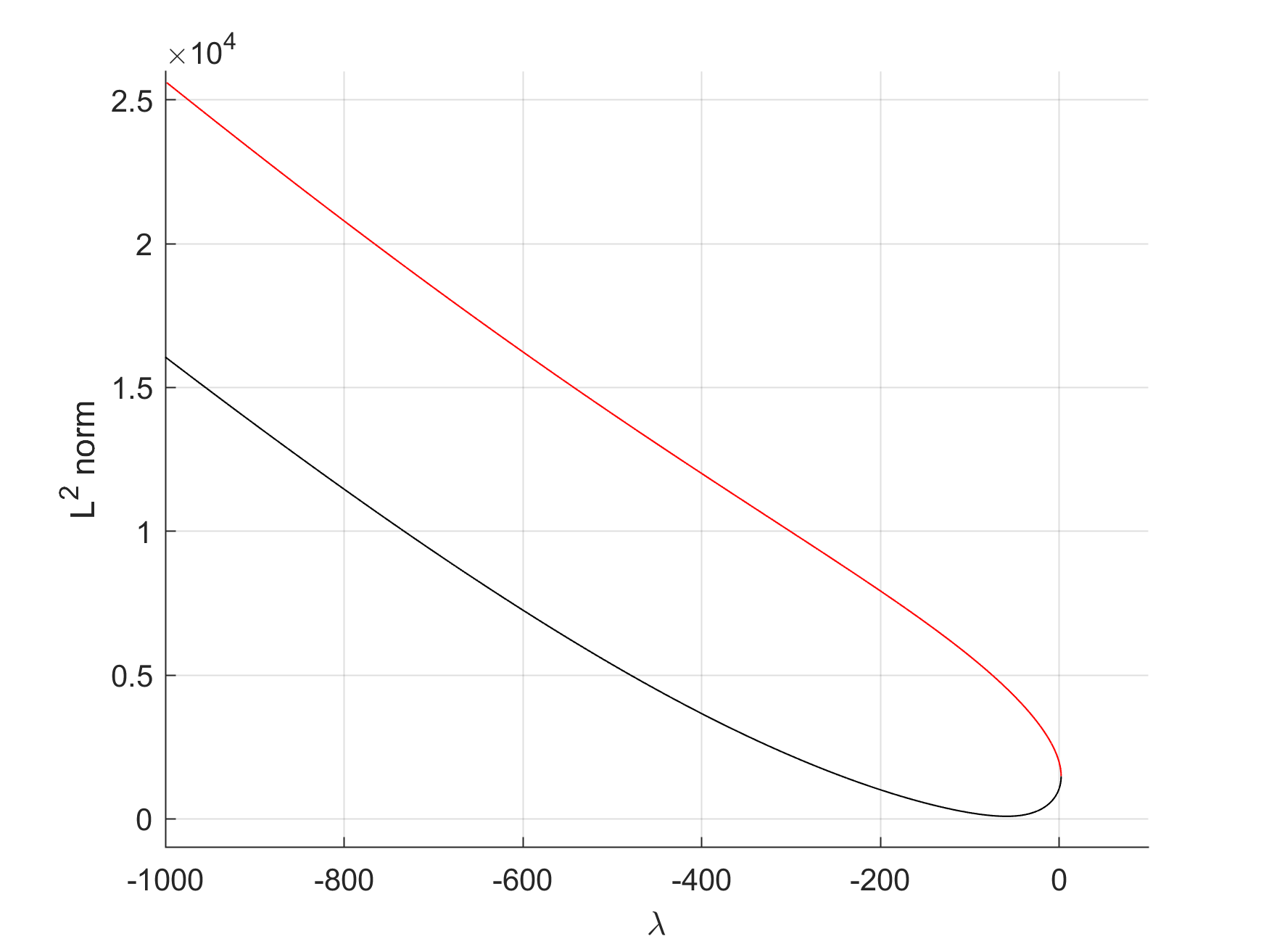}
	\caption{Left branch of  Figure \ref{Fig4d} ($\mu = 54$)}\label{Fig5c}
\end{subfigure}%
\begin{subfigure}[t]{0.45\textwidth}
	\centering
	\includegraphics[scale=0.53]{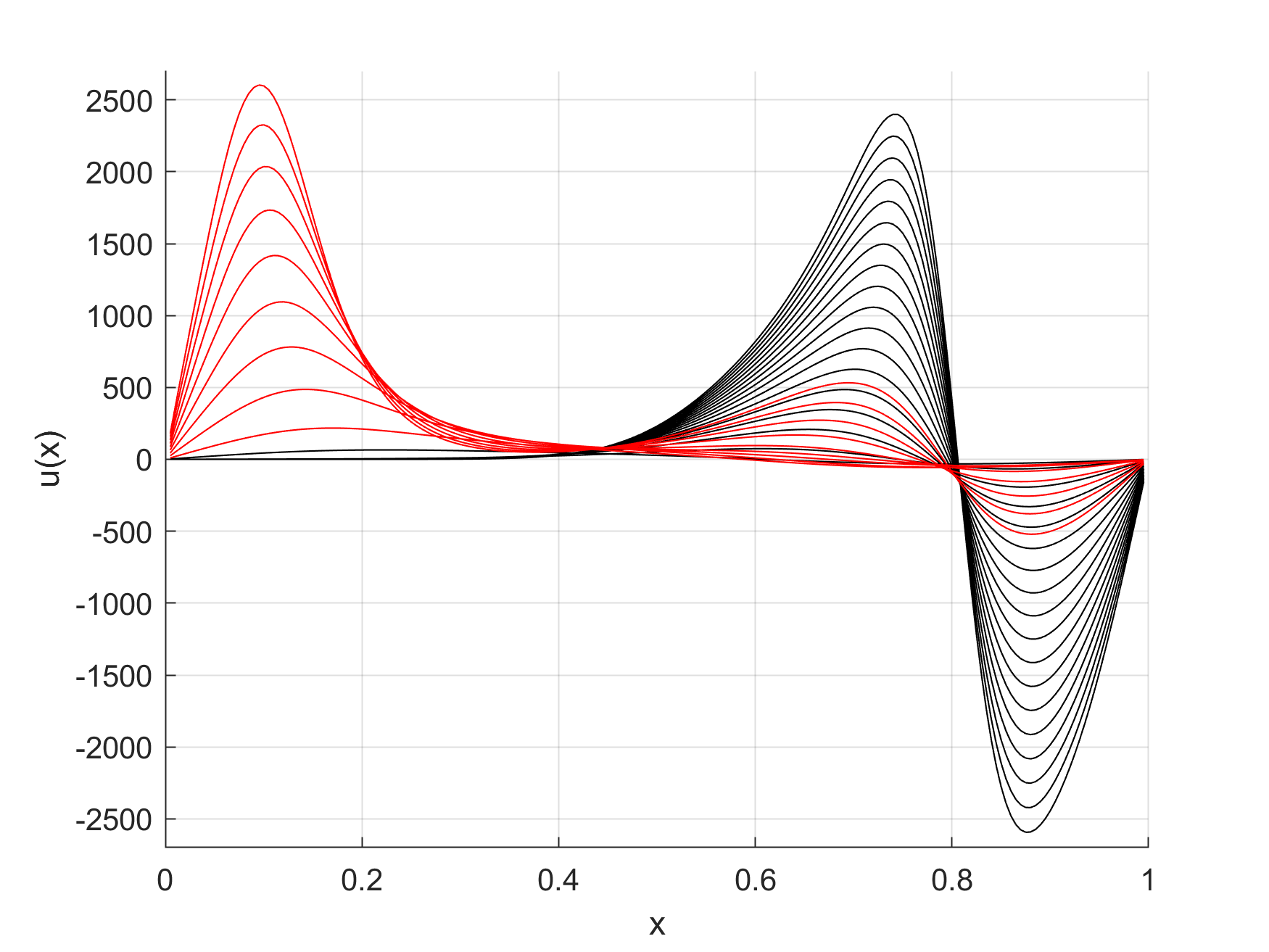}
	\caption{Plots of some solutions on the left}\label{Fig5d}
\end{subfigure}
\caption{A series of plots of 1-node solutions (right)  along some of the components of Figure \ref{Fig4} (left).}
\label{Fig5}
\end{figure}
\par
Similarly, according to Theorem \ref{th2.1}, for the special choice \eqref{3.1}, the third eigencurve,  $\Sigma_3(\l)$, is far from concave if \eqref{3.1} holds. This becomes apparent by simply having a look
at the plot of $\Sigma_3(\l)$ superimposed in Figure \ref{Fig1}.  According to it, for every $\mu\in ((3\pi)^2,\mu_3)$, the set $\Sigma_3^{-1}(\mu)$ consists of two negative eigenvalues,
$\l_{-[1,3]}(\mu)<\l_{-[2,3]}(\mu)<0$,  plus two positive eigenvalues,  $0<\l_{+[2,3]}(\mu)<\l_{+[1,3]}(\mu)$. Moreover, by Proposition \ref{pr2.2},
$$
  0< \l_{+[2,3]}(\mu)=-\l_{-[2,3]}(\mu)<\l_{+[1,3]}(\mu)=-\l_{-[1,3]}(\mu)
$$
and, according to our numerical experiments,
$$
  \dot{\Sigma}_2(\l_{-[1,3]}(\mu))>0, \quad \dot{\Sigma}_2(\l_{-[2,3]}(\mu))<0, \quad
  \dot{\Sigma}_2(\l_{+[2,3]}(\mu))>0, \quad \dot{\Sigma}_2(\l_{+[1,3]}(\mu))<0.
$$
Thus, the transversality condition of \cite{CR} holds at each of these critical values. Therefore, owing to the local bifurcation theorem of \cite{CR}, an analytic curve of 2-node solutions emanates from $u=0$
at each of these four singular values of $\l$.
The first three plots of Figure \ref{Fig6} show these curves
for three different values of the secondary parameter $\mu$. Namely: $\mu=105$, $\mu =108.1$ and $\mu=110$, respectively. All these  values  of $\mu$ are bellow $\mu_3$. The last plot of Figure \ref{Fig6} has been computed for $\mu=140 >\mu_3$ and shows three components of 2-node solutions separated away from $u=0$. For this value of $\mu$ no solution with 2 interior nodes can bifurcate from $u=0$.
\par
\begin{figure}[h!]
	\begin{subfigure}[t]{0.45\textwidth}
		\centering
		\includegraphics[scale=0.53]{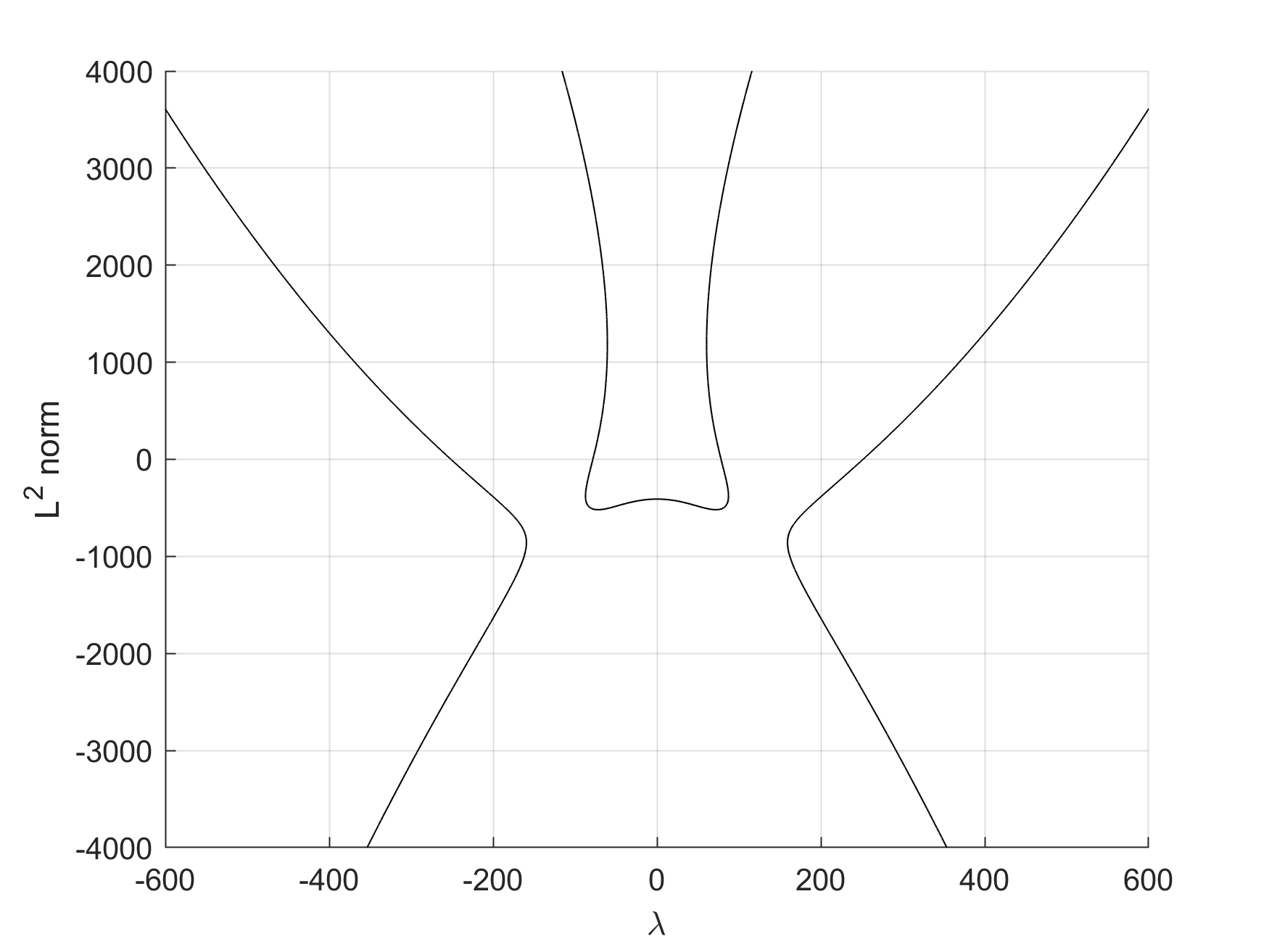}
		\caption{$\mu = 105$}\label{Fig6a}
	\end{subfigure}%
	\begin{subfigure}[t]{0.45\textwidth}
		\centering
		\includegraphics[scale=0.53]{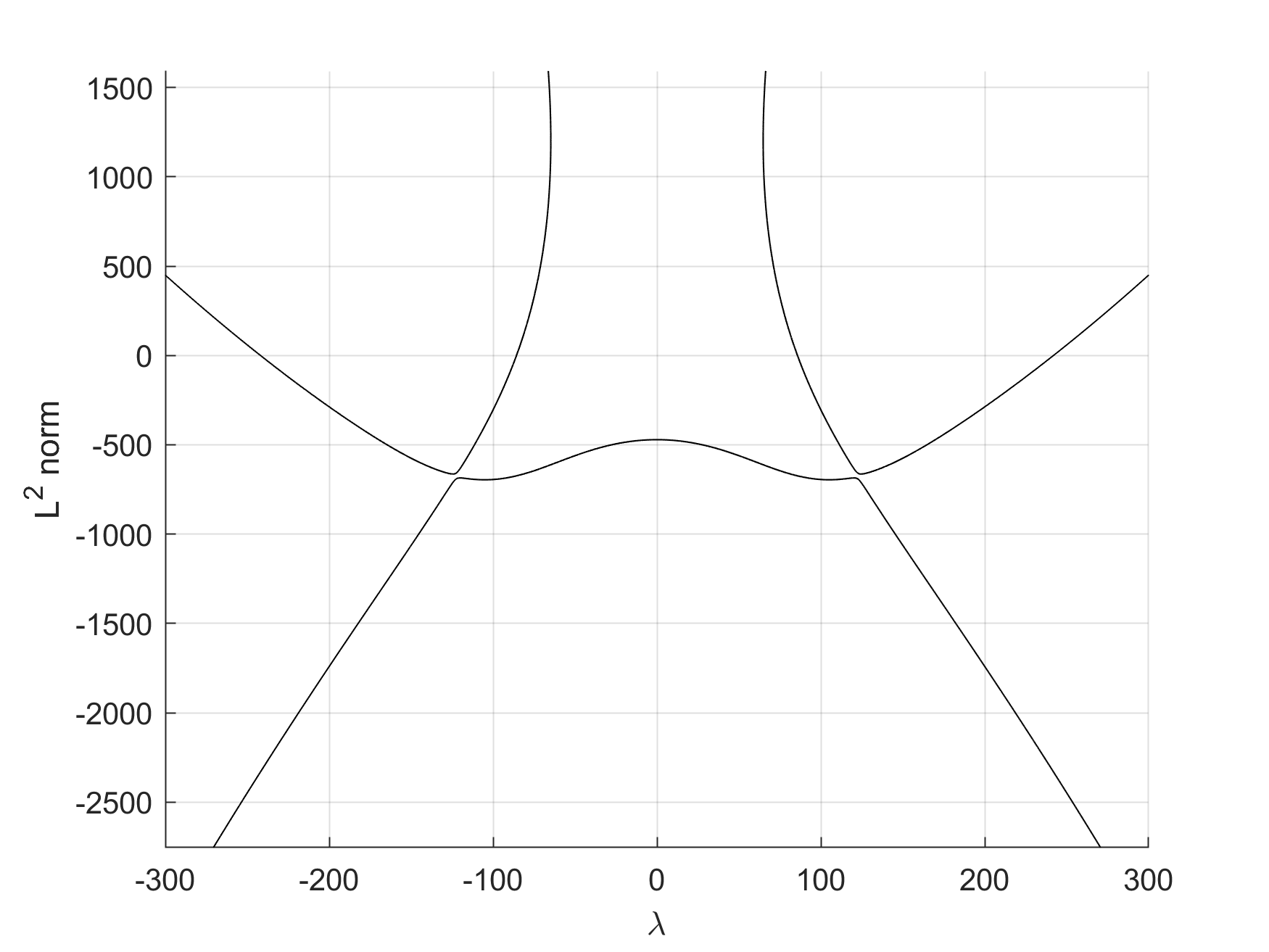}
		\caption{$\mu = 108.1$}\label{Fig6b}
	\end{subfigure}
	
	\begin{subfigure}[t]{0.45\textwidth}
		\centering
		\includegraphics[scale=0.53]{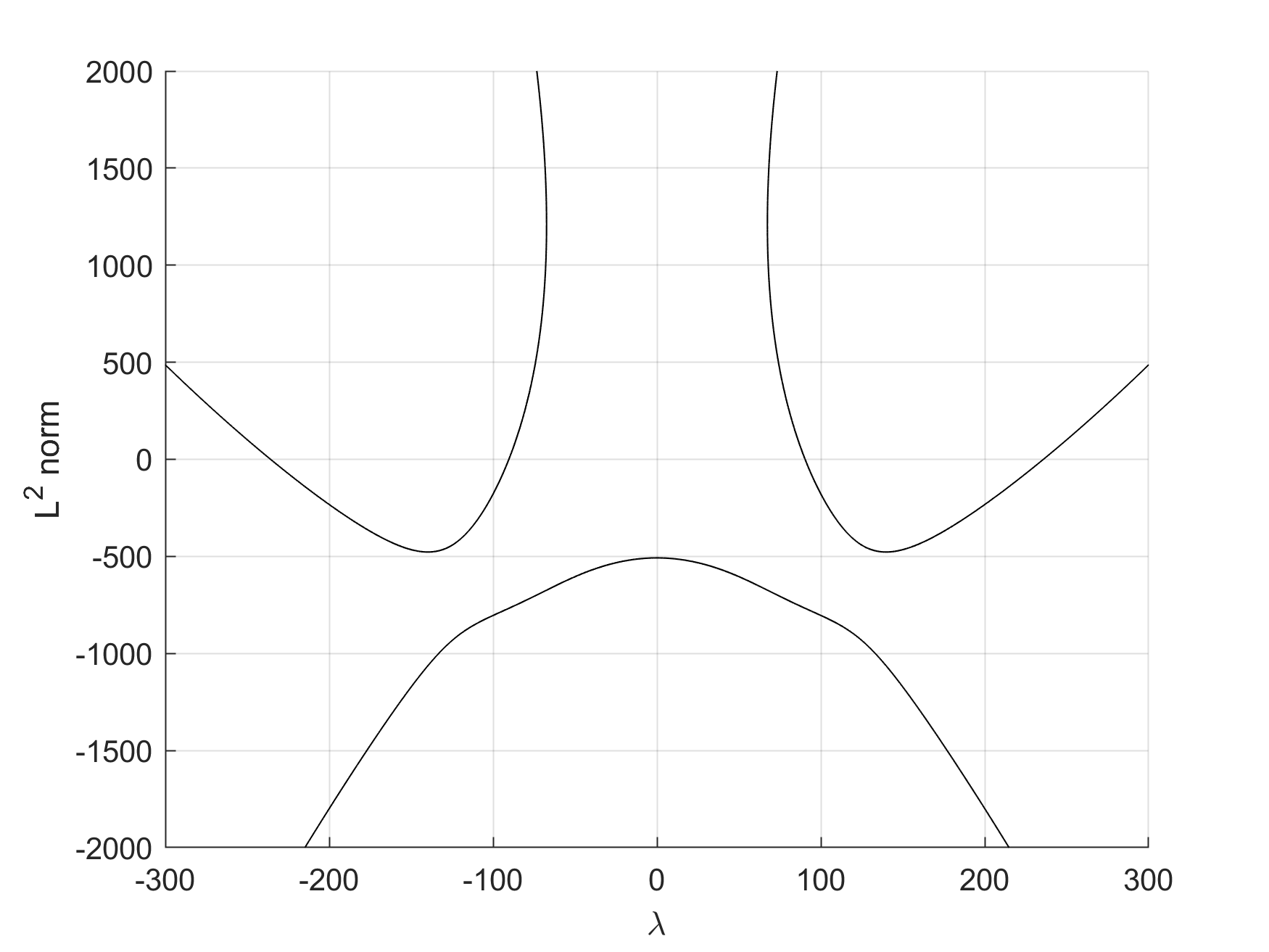}
		\caption{$\mu = 110$}\label{Fig6c}
	\end{subfigure}%
	\begin{subfigure}[t]{0.45\textwidth}
		\centering
		\includegraphics[scale=0.53]{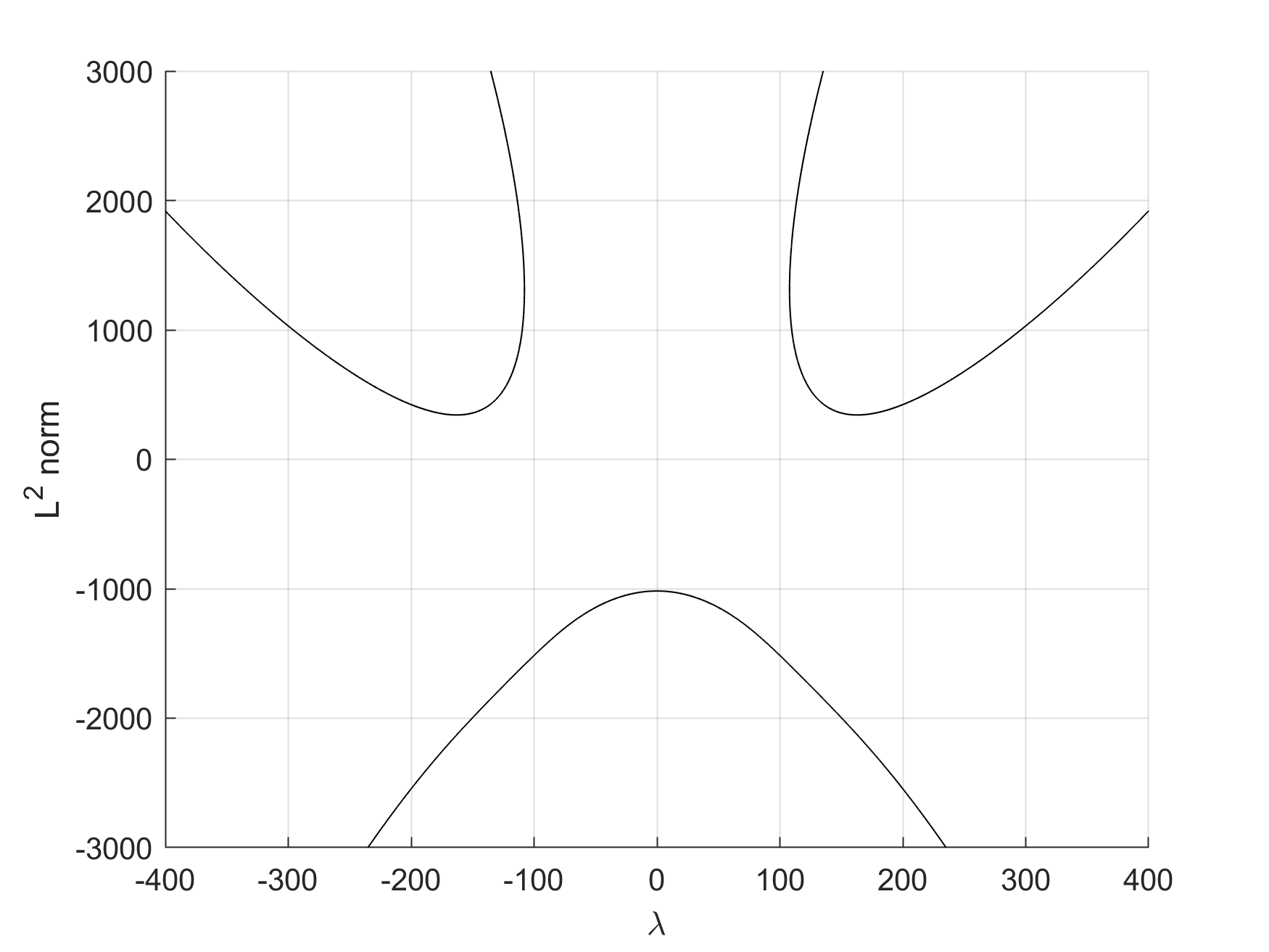}
		\caption{$\mu = 140$}\label{Fig6d}
	\end{subfigure}
\caption{Four representative bifurcation diagrams of 2-node solutions.}
\label{Fig6}
\end{figure}
More precisely, at $\mu=105$ the problem \eqref{1.1} possesses three components
of solutions with two interior nodes. Two of them bifurcating from $u=0$ at $\l_{-[1,3]}(105)$ and $\l_{+[1,3]}(105)$, respectively, and the third one linking $(\l_{-[2,3]}(105),0)$ with   $(\l_{+[2,3]}(105),0)$. According to
our numerical experiments these components are unbounded in $\R\times \mathcal{C}[0,1]$, and
are persistent for all further value of $\mu$ bellow some critical value, $\mu_c<108.1$, where the three components meet. Thus, for $\mu=\mu_c$ there is a component of the set of non-trivial solutions of
\eqref{1.1} bifurcating from $u=0$ at four different values of $\l$: $\l_{\pm[1,3]}(\mu_c)$ and $\l_{\pm[2,3]}(\mu_c)$. The plot in Figure \ref{Fig6b} shows the corresponding global bifurcation diagram for $\mu=108.1$, a value of $\mu$ slightly greater than $\mu_c$, where the three components of set of non-trivial solutions are very close. By comparison with the global bifurcation diagram for $\mu=105$, it becomes apparent that a global imperfect bifurcation phenomenon has happened at the critical value $\mu_c$. As a consequence of this imperfect bifurcation one of the components bifurcating from $u=0$ links
$(\l_{-[1,3]}(108.1),0)$ with  $(\l_{-[2,3]}(108.1),0)$, another links $(\l_{+[2,3]}(108.1),0)$ with   $(\l_{+[1,3]}(108.1),0)$, while the third one remains separated away from $u=0$. Actually, the latest one is separated away from zero for any further value of $\mu$. Therefore,
there have occurred a sort of reorganization in components of the set of 2-node solutions of
\eqref{1.1} as the parameter $\mu$ crossed the critical value $\mu_c$. The pictures in Figures \ref{Fig6c}, \ref{Fig6d} show the plots of the corresponding components for $\mu=110<\mu_3$ and $\mu=140>\mu_3$, where the previous bifurcations from $u=0$ of these components are lost. For larger values of $\mu$ the solutions along these three components become larger and larger  and it remains an open problem to ascertain whether, or not, \eqref{1.1} can admit some 2-node solution for sufficiently large $\mu$. Figure \ref{Fig7} shows the plots of some distinguished 2-node solutions of \eqref{1.1} along some of the curves of the bifurcation diagrams plotted in Figure \ref{Fig6}.
\par
\begin{figure}[h!]
	\centering
	\begin{subfigure}[t]{0.45\textwidth}
		\centering
		\includegraphics[scale=0.4]{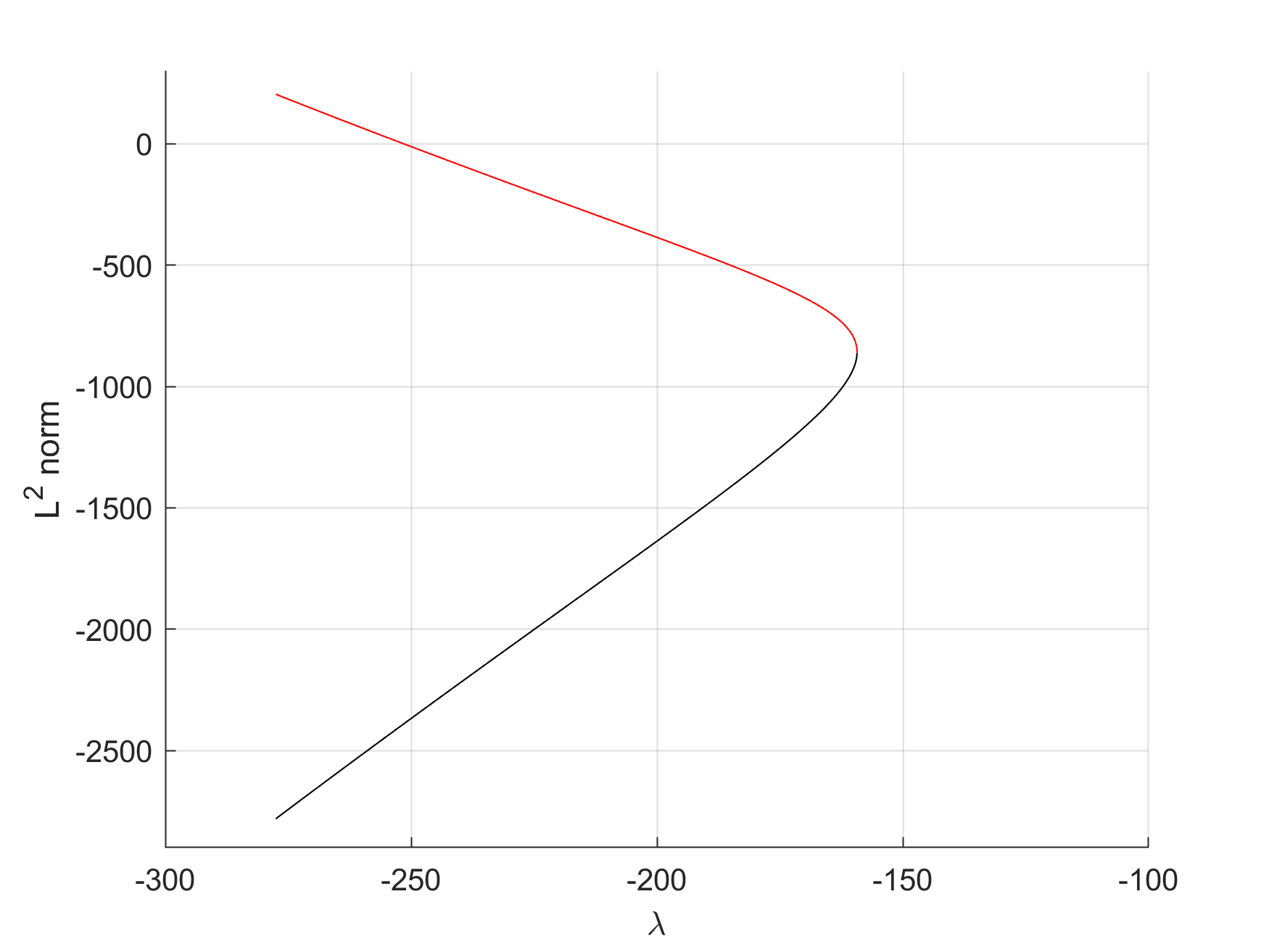}
		\caption{Left branch of  Figure \ref{Fig6a} ($\mu = 105$)}
	\end{subfigure}%
	\begin{subfigure}[t]{0.45\textwidth}
		\centering
		\includegraphics[scale=0.4]{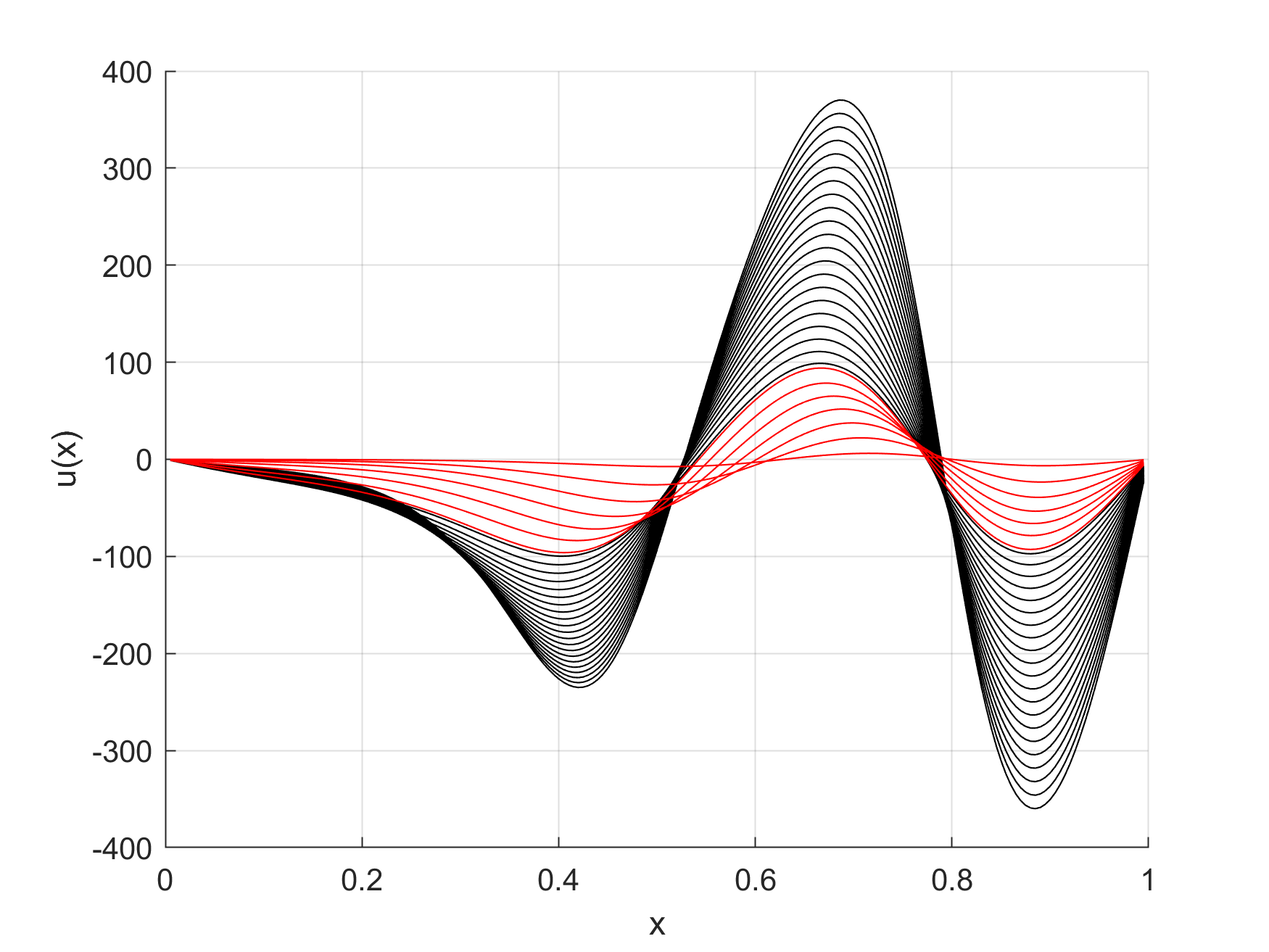}
		\caption{Corresponding profiles of solutions.}
	\end{subfigure}
	
	\begin{subfigure}[t]{0.45\textwidth}
		\centering
		\includegraphics[scale=0.4]{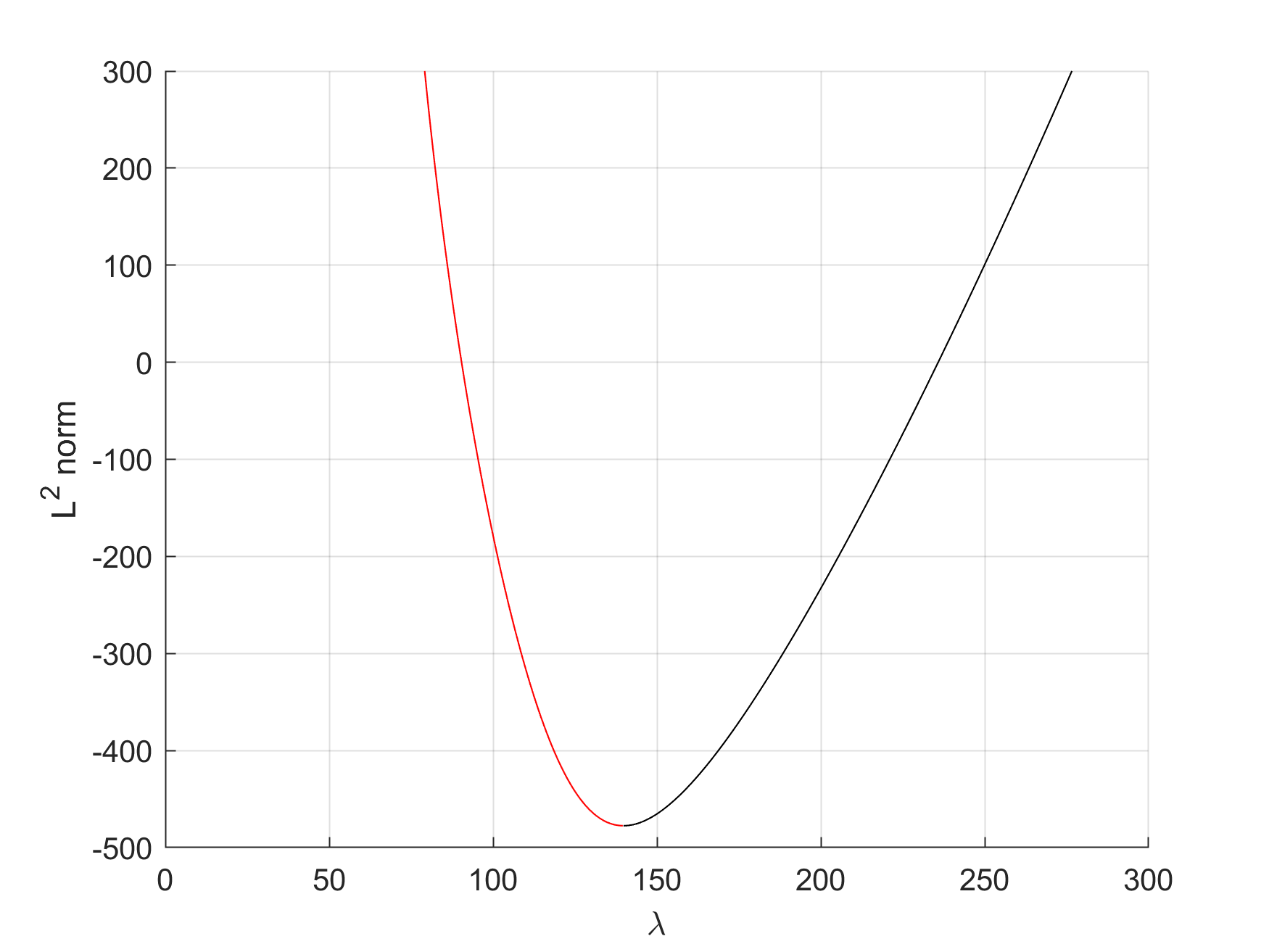}
		\caption{Right branch of Figure \ref{Fig6c} ($\mu = 110$) }
	\end{subfigure}%
	\begin{subfigure}[t]{0.45\textwidth}
		\centering
		\includegraphics[scale=0.4]{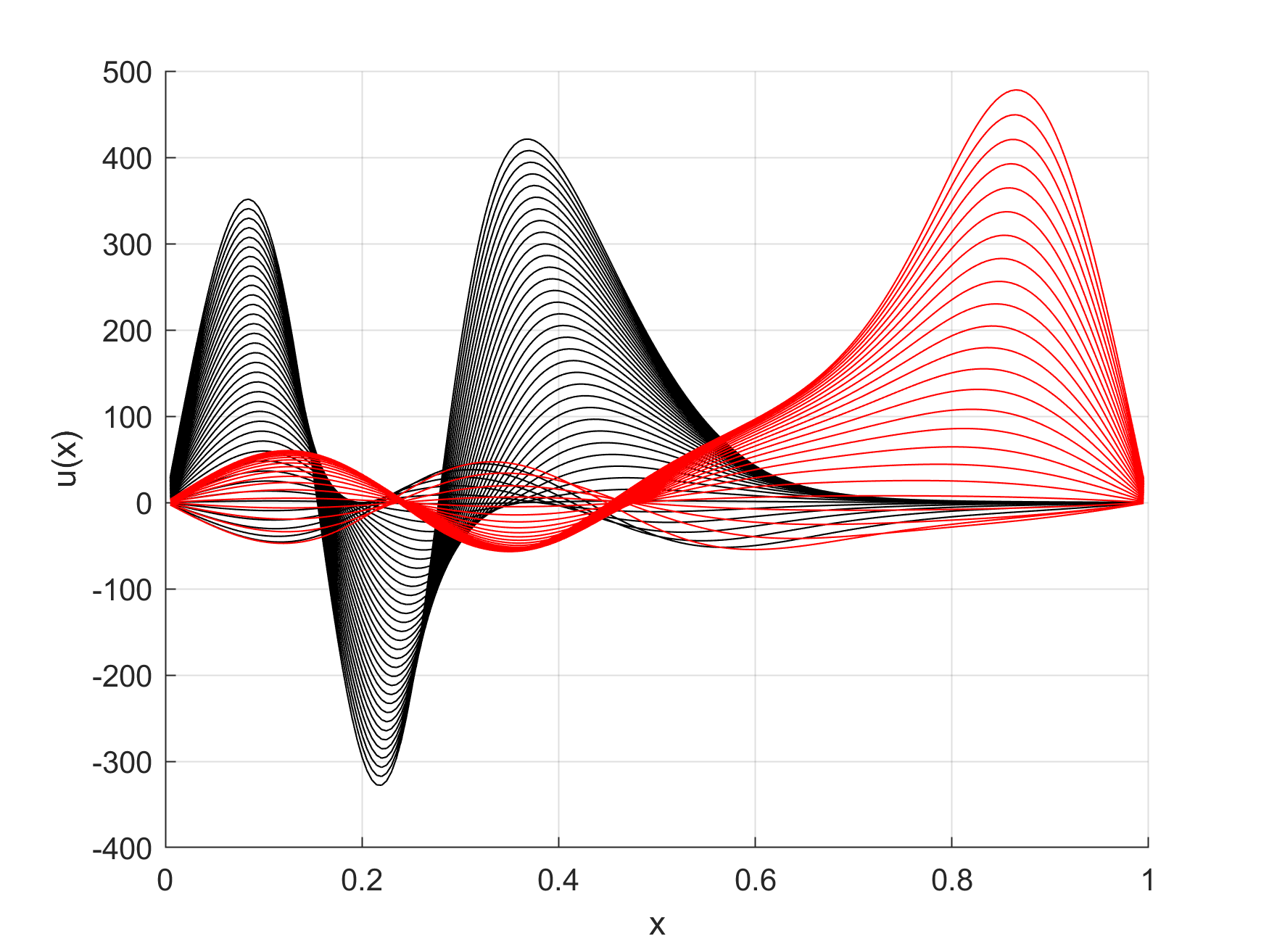}
		\caption{Corresponding profiles of solutions}
	\end{subfigure}
	\caption{Some plots of 2-node solutions (right) along the bifurcation diagrams of Figure \ref{Fig6} (left).}
	\label{Fig7}
\end{figure}
Finally, Figure \ref{Fig8} superimposes the global bifurcation diagrams of positive solutions found
in \cite{LGMM} (in blue) with the global bifurcation diagrams of nodal solutions with one node (in red) and two nodes (in black) computed in this paper for four different values of $\mu$: $0$, $54$, $70$ and $100$.
Although all the components of nodal solutions persist for these values of $\mu$, the component
of positive solutions shrinks to a single point and disappear at a value of $\mu$ above $54$ but very close to it. In Figure \ref{Fig8b} one can still see an small piece of blue trace component shortly before disappearing for an slightly grater value of $\mu$.
\begin{figure}[h!]
	\centering
	\begin{subfigure}[t]{0.45\textwidth}
		\centering
		\includegraphics[scale=0.53]{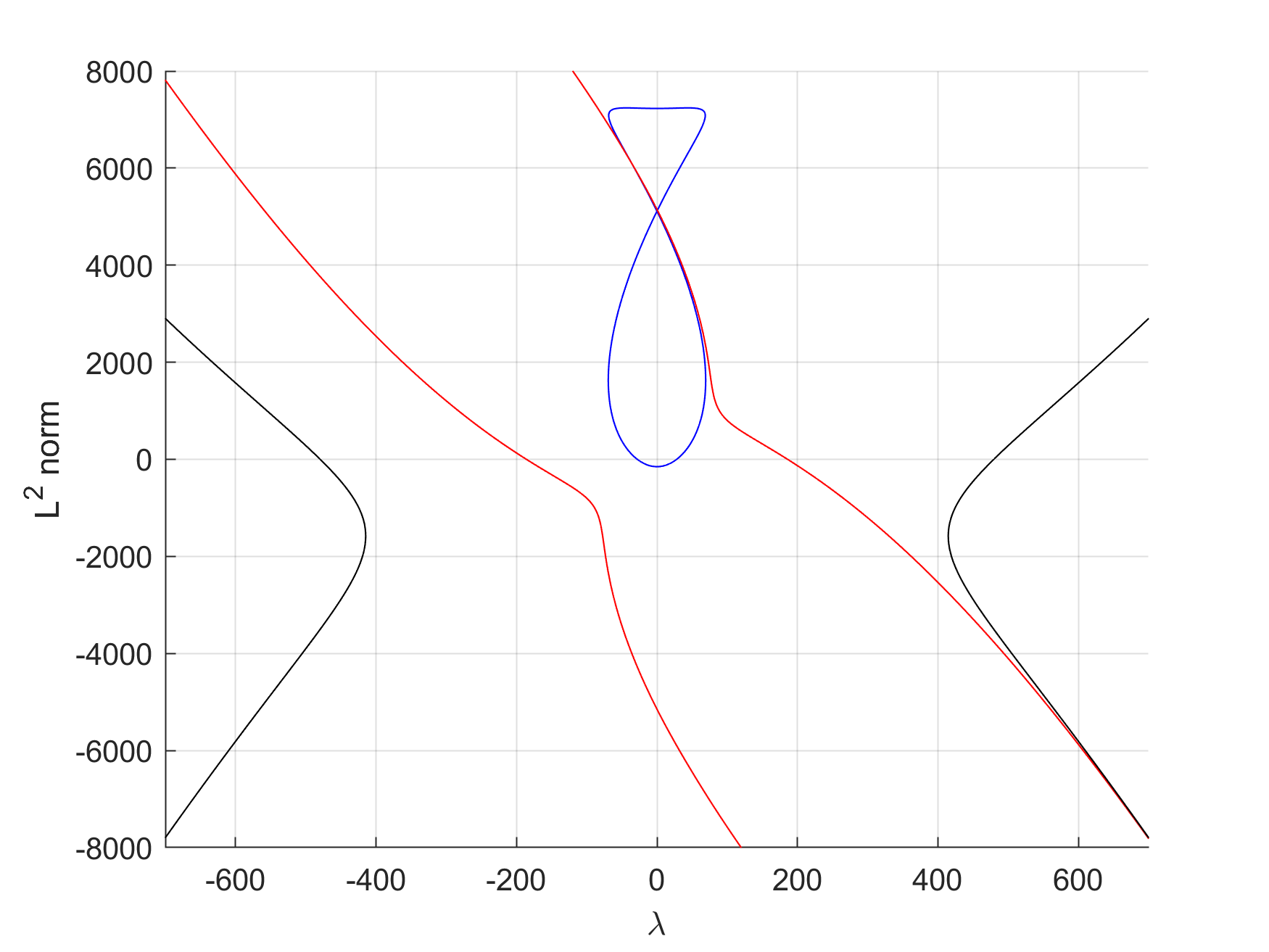}
		\caption{$\mu = 0$}\label{Fig8a}
	\end{subfigure}%
	\begin{subfigure}[t]{0.45\textwidth}
		\centering
		\includegraphics[scale=0.53]{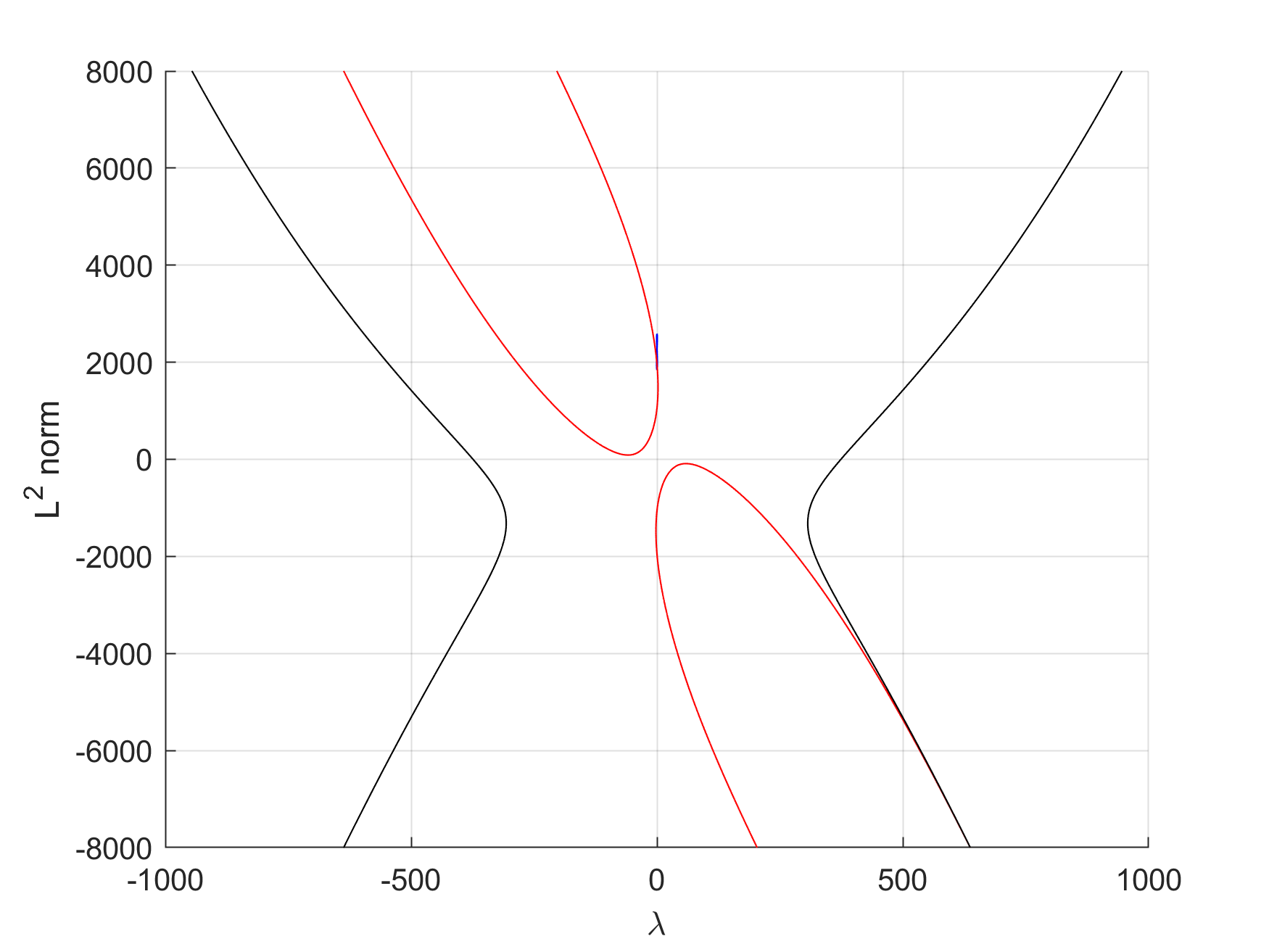}
		\caption{$\mu = 54$}\label{Fig8b}
	\end{subfigure}
	
	\begin{subfigure}[t]{0.45\textwidth}
		\centering
		\includegraphics[scale=0.53]{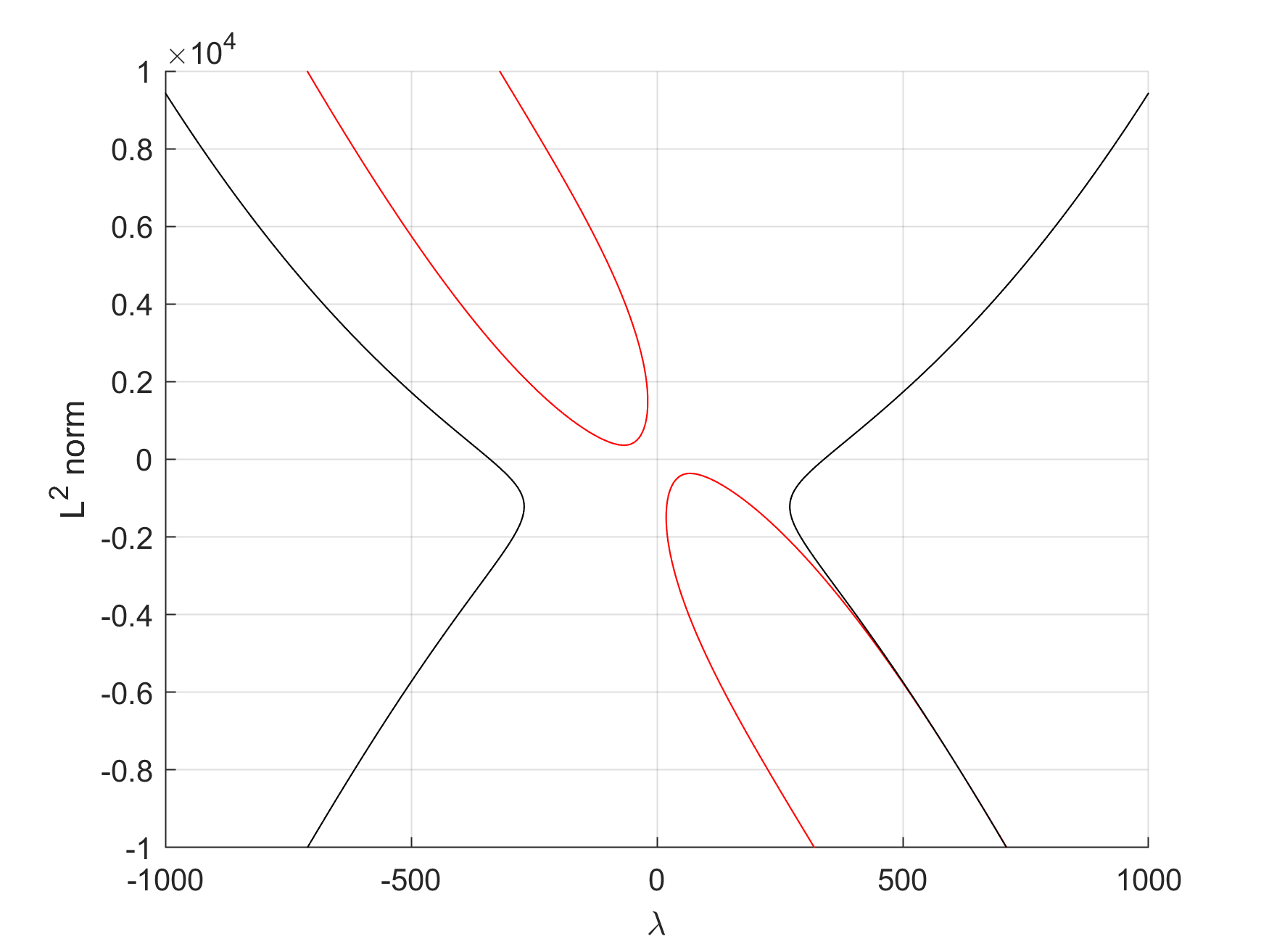}
		\caption{$\mu = 70$}\label{Fig8c}
	\end{subfigure}%
	\begin{subfigure}[t]{0.45\textwidth}
		\centering
		\includegraphics[scale=0.53]{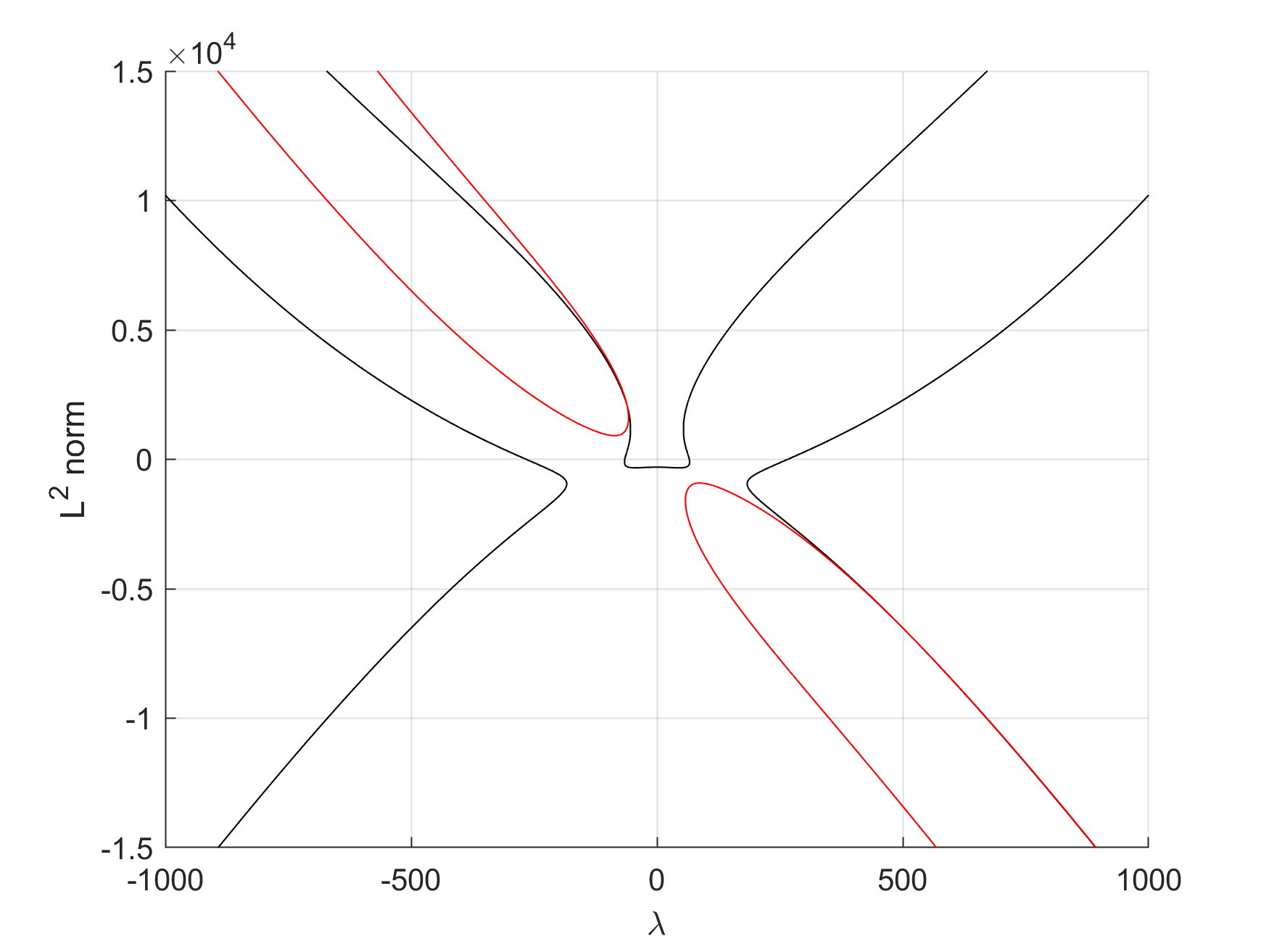}
		\caption{$\mu = 100$}\label{Fig8d}
	\end{subfigure}
	\caption{Some bifurcation diagrams with superimposed branches of positive (blue), 1-node (red) and 2-node (black) solutions.}
	\label{Fig8}
\end{figure}

\section{Numeric of bifurcation problems}

To discretize \eqref{1.1} we have used two methods. To compute the small solutions bifurcating
from $u=0$ we implemented a pseudo-spectral method combining a trigonometric
spectral method with collocation at equidistant points, as in most of our previous numerical experiments (see, e.g., \cite{GRLGJDE,GRLGNA,LGEDMM,LGMMJDE,LGMMTPB,LGMMMCS,LGMMT}). This gives high accuracy
(see, e.g., Canuto, Hussaini, Quarteroni and Zang \cite{CHQZ}). However, to  compute the large solutions we have used a centered finite differences scheme, which gives high accuracy at a lower computational cost, for  as it  provides us  with a much faster code to compute large pieces of curves of the global bifurcation diagrams.
\par
The pseudo-spectral method is easier to use and more efficient for choosing the shot direction from the trivial solution in order to compute the small nodal solutions of \eqref{1.1}, as well as to detect bifurcation points along the bifurcation diagrams. Its main advantage for accomplishing this task relies
on the fact that it provides us with the true bifurcation values from the trivial solution, while the differences scheme only provides  with an approximation to these  bifurcation values.
\par
For general Galerkin approximations, the local convergence of the solution paths at regular, turning and simple bifurcation points was proven by Brezzi, Rappaz and Raviart in \cite{BRR1,BRR2,BRR3} and by L\'{o}pez-G\'{o}mez et al. in \cite{LGEDMM,LGMMV} at codimension two singularities in the context
 of systems. In these situations, the local structure of the solution sets for the continuous and
 the discrete models are known to be equivalent. The global
continuation solvers used to compute the solution curves of this papers, as well as the
dimensions of the unstable manifolds of all the solutions along them, have been built from the
theory on continuation methods of Allgower and Georg \cite{AG}, Crouzeix and Rappaz \cite{CrRa},
Eilbeck \cite{Ei}, Keller \cite{Ke}, L\'{o}pez-G\'{o}mez \cite{LG88} and L\'{o}pez-G\'{o}mez, Eilbeck,
Duncan and Molina-Meyer \cite{LGEDMM}.
\par
The complexity of the bifurcation diagrams, as well as their quantitative features,
required an extremely careful control of all the steps in subroutines. This explains why
the available commercial  bifurcation packages, such as AUTO-07P are un-useful to deal with
differential equations with heterogeneous coefficients. As a matter of fact,
Doedel and Oldeman  admitted in \cite[p.18]{DO} that
\vspace{0.2cm}
\par
\begin{small}
\lq \lq Note that, given the non-adaptive spatial discretization,
the computational procedure here is not appropriate for PDEs with solutions that rapidly
vary in space, and care must be taken to recognize spurious solutions and bifurcations.\rq\rq
\end{small}
\par
\vspace{0.2cm}
This is just one of the main problems that we found in our numerical experiments, as the number of critical points of the solutions increases according to the dimensions
of unstable manifolds, and the turning and bifurcation points might be very close.


\begin{thebibliography}{99}


\bibitem{AG} E. L. Allgower and K. Georg, \emph{Introduction to Numerical Continuation Methods}, SIAM
Classics in Applied Mathematics 45, SIAM, Philadelphia, 2003.

\bibitem{ALG} I. Ant\'{o}n and J. L\'{o}pez-G\'{o}mez, Principal eigenvalues of weighted
periodic-parabolic problems, \emph{Rend. Istit. Mat. Univ. Trieste} \textbf{49} (2017), 287--318.

\bibitem{ABHN} W. Arendt, C. J. K. Batty, M. Hieber and F. Neubrander, \emph{Vector-valued Laplace transforms and Cauchy problems}, Monographs in Mathematics vol. 96, Birkh\"{a}user/Springer,  Basel, 2011.

\bibitem{AN} W. Arendt, A. Grabosch, G. Greiner, U. Groh, H. P. Lotz, U. Moustakas, R. Nagel,
F. Neubrander and U. Schlotterbeck, \emph{One Parameter Semigroups of Positive Operators}, Lectures Notes in Mathematics 1184, Berlin, Springer, 1986.

\bibitem{BNV}  H. Berestycki, L. Nirenberg and S. R. S. Varadhan, The principal eigenvalue and maximum principle for second order elliptic operator sin general domains,
    \emph{Comm. Pure Appl. Math.} \textbf{47} (1994), 47--92.

\bibitem{BRR1} F. Brezzi, J. Rappaz and P. A. Raviart,  Finite dimensional approximation of nonlinear
problems, part I: Branches of nonsingular solutions, \emph{Numer. Math.} \textbf{36} (1980), 1--25.

\bibitem{BRR2} F. Brezzi, J. Rappaz and P. A. Raviart, Finite dimensional approximation of nonlinear
problems, part II: Limit points, \emph{Numer. Math.} \textbf{37} (1981), 1--28.

\bibitem{BRR3} F. Brezzi, J. Rappaz and P. A. Raviart, Finite dimensional approximation of nonlinear
problems, part III: Simple bifurcation points, \emph{Numer. Math.} \textbf{38} (1981), 1--30.

\bibitem{BGH} G. Buttazzo, M. Giaquinta and S. Hildebrandt, \emph{One-dimensional Variational Problems}, Clarendon Press, Oxford, 1998.

\bibitem{CCLG} S. Cano-Casanova and J. L\'{o}pez-G\'{o}mez, Properties of the principal eigenvalues of a general class of nonclassical mixed boundary value problems, \emph{J. Dif. Eqns.}
\textbf{178} (2002), 123--211.

\bibitem{CHQZ} C. Canuto, M. Y. Hussaini, A. Quarteroni and T. A. Zang, \emph{Spectral Methods in Fluid
Mechanics}, Springer, Berlin, Germany, 1988.

\bibitem{CR} M. G. Crandall and P. H. Rabinowitz, Bifurcation from simple eigenvalues,
\emph{J. Funct. Anal.} \textbf{8} (1971), 321--340.

\bibitem{CrRa} M. Crouzeix and J. Rappaz, \emph{On Numerical Approximation in Bifurcation Theory},
Recherches en Mathématiques Appliquées 13, Masson, Paris, 1990.

\bibitem{DO} E. J. Doedel and B. E. Oldeman,  AUTO-07P: Continuation and bifurcation software
for ODEs, 2012, http://www.dam.brown.edu/people/sandsted/auto/auto07p.pdf.

\bibitem{ELG} J. Esquinas and J. L\'{o}pez-G\'{o}mez, Optimal multiplicity in local bifurcation theory: Generalized generic eigenvalues, \emph{J. Diff. Eqns.} \textbf{71} (1988), 72--92.

\bibitem{Ei} J. C. Eilbeck, The pseudo-spectral method and path-following in reaction-diffusion
bifurcation studies, \emph{SIAM J. Sci. Stat. Comput.} \textbf{7} (1986), 599--610.

\bibitem{GRLGJDE} R. G\'{o}mez-Re\~{n}asco and J. L\'{o}pez-G\'{o}mez, The effect of varying coefficients on the dynamics of a class of superlinear indefinite reaction diffusion equations, \emph{J. Diff. Eqns.} \textbf{167}  (2000), 36--72.

\bibitem{GRLGNA} R. G\'{o}mez-Re\~{n}asco and J. L\'{o}pez-G\'{o}mez, On the existence and numerical computation of classical and non-classical solutions for a family of elliptic boundary
    value problems, \emph{Nonl. Anal. TMA} \textbf{48} (2002), 567--605.

\bibitem{HK} P. Hess and T. Kato, On some linear and nonlinear eigenvalue problems with an indefinite weight function, \emph{Comm. Part. Dif. Eqns.} \textbf{5} (1980), 999--1030.

\bibitem{KaA} T. Kato, Superconvexity of the spectral radius and convexity of the spectral bound
and the type, \emph{Math. Z.} \textbf{180} (1982), 265--273.

\bibitem{KaB} T. Kato,  \emph{Perturbation Theory for Linear Operators}, Springer, 1995.

\bibitem{Ke} H. B. Keller, \emph{Lectures on Numerical Methods in Bifurcation Problems},
Tata Insitute of Fundamental Research, Springer, Berlin, Germany, 1986.

\bibitem{KY} H. B. Keller and Z. H. Yang, A direct method for computing higher order folds,
\emph{SIAM J. Sci. Stat.} \textbf{7} (1986), 351--361.

\bibitem{LG88} J.  L\'{o}pez-G\'{o}mez \emph{Estabilidad y Bifurcaci\'on Est\'atica. Aplicaciones
y M\'{e}todos Num\'{e}ericos}, Cuadernos de Matem\'{a}tica y Mec\'{a}nica, Serie Cursos y Seminarios
4, Santa Fe, R. Argentina, 1988.

\bibitem{LG01} J. L\'{o}pez-G\'{o}mez, \emph{Spectral Theory and Nonlinear Functional Analysis},
CRC Press, Boca Raton, 2001.

\bibitem{LG13} J. L\'{o}pez-G\'{o}mez, \emph{Linear Second Order Elliptic Operators},
World Scientific Publishing, 2013.

\bibitem{LGEDMM} J. L\'{o}pez-G\'{o}mez, J. C. Eilbeck, K. Duncan and M. Molina-Meyer,  Structure of
solution manifolds in a strongly coupled elliptic system, \emph{IMA J. Numer. Anal.} \textbf{12}
(1992), 405--428.

\bibitem{LGMM} J. L\'opez-G\'omez and M. Molina-Meyer, Bounded components of positive solutions of
abstract fized point equations: mushrooms, loops and isolas, \emph{J. Diff. Eqns.} \textbf{209} (2005),
416--441.

\bibitem{LGMMJDE} J. L\'opez-G\'omez and M. Molina-Meyer, Superlinear indefinite systems: Beyond Lotka Volterra models, \emph{J. Differ. Eqns.} \textbf{221} (2006), 343--411.

\bibitem{LGMMTPB} J. L\'opez-G\'omez and M. Molina-Meyer, The competitive exclusion principle versus
biodiversity through segregation and further adaptation to spatial heterogeneities, \emph{Theor.
Popul. Biol.} \textbf{69} (2006), 94--109.

\bibitem{LGMMMCS} J. L\'opez-G\'omez and M. Molina-Meyer,  Modeling coopetition, \emph{Math. Comput. Simul.} \textbf{76} (2007), 132--140.

\bibitem{LGMMT} J. L\'opez-G\'omez, M. Molina-Meyer and A. Tellini, Intricate dynamics caused by facilitation in competitive environments within polluted habitat patches, \emph{Eur. J.
    Appl. Maths.} doi:10.1017/S0956792513000429 (2014), 1--17.

\bibitem{LGMMR} J. L\'opez-G\'omez, M. Molina-Meyer and P. H. Rabinowitz, Global bifurcation diagrams of one node solutions in a class of degenerate bundary value problems, \emph{Disc. Cont. Dyn. Sys. B}
    \textbf{22} (2017), 923--946.

\bibitem{LGMMV} J. L\'opez-G\'omez, M. Molina-Meyer and M. Villareal, Numerical coexistence of coexistence states, \emph{SIAM J. Numer. Anal.} \textbf{29} (1992), 1074--1092.

\bibitem{LGTZ} J. L\'{o}pez-G\'{o}mez, A. Tellini and F. Zanolin, High multiplicity and complexity of the bifurcation diagrams of large solutions for a class of superlinear indefinite problems, \emph{Commun. Pure Appl. Anal.} \textbf{13} (2014), 1--73.

\bibitem{LGR1} J. L\'opez-G\'omez and P. H. Rabinowitz, Nodal solutions for a class of degenerate
boundary value problems, \emph{Adv. Nonl. Studies} \textbf{15} (2015), 253--288.

\bibitem{LGR2} J. L\'opez-G\'omez and P. H. Rabinowitz, Nodal solutions for a class of degenerate
one dimensional BVPs, \emph{Top. Meth. Nonl. Anal.} \textbf{49} (2017), 359--376.

\bibitem{LGR3} J. L\'opez-G\'omez and P. H. Rabinowitz, The structure of the set of 1-node solutions
of a clss of degenerate BVP's, \emph{J. Diff. Eqns.} 268 (2020), 4691--4732.

\bibitem{MPZ} J. Mawhin, D. Papini and F. Zanolin, Boundary blow-up for differential equations with indefinite weight, \emph{J. Diff. Eqns.} \textbf{188} (2003), 33--51.



\bibitem{Ra1} P. H. Rabinowitz, Some global results for nonlinear eigenvalue problems, \emph{J. Funct.
Anal.} \textbf{7} (1971), 487--513.

\bibitem{Ra2} P. H. Rabinowitz, A note on a nonlinear eigenvalue problem for a class of differential equations, \emph{J. Diff. Eqns.} \textbf{9} (1971), 536--548.

\bibitem{Ra3} P. H. Rabinowitz, A note on a pair of solutions of a nonlinear Sturm--Liouville problem,
\emph{Manuscr. Math.} \textbf{11} (1974), 273--282.




\end{thebibliography}
\end{document}